\newtheorem{theorem}{Theorem}[section]
\newtheorem{lemma}[theorem]{Lemma}
\newtheorem{proposition}[theorem]{Proposition}
\newtheorem{cit}[theorem]{Citation}
\newtheorem{observation}[theorem]{Observation}
\newtheorem*{main:inherit_ICC}{Theorem~\ref{thrm:inherit_ICC}}
\newtheorem*{main:diverse_ICC}{Theorem~\ref{thrm:diverse_ICC}}
\newtheorem*{main:thomp_mcduff}{Theorem~\ref{thrm:thomp_mcduff}}
\theoremstyle{definition}
\newtheorem{definition}[theorem]{Definition}
\newtheorem{remark}[theorem]{Remark}
\newtheorem{example}[theorem]{Example}
\newtheorem{question}[theorem]{Question}
\newcommand{\Z}{\mathbb{Z}}
\newcommand{\N}{\mathbb{N}}
\newcommand{\C}{\mathbb{C}}
\newcommand{\clone}{\kappa}
\newcommand{\symmclone}{\varsigma}
\newcommand{\vN}{\mathcal{L}}
\newcommand{\tree}{\mathcal{T}}
\newcommand{\defeq}{\mathbin{\vcentcolon =}}
\DeclareMathOperator{\Aut}{Aut}
\DeclareMathOperator{\GL}{GL}
\DeclareMathOperator{\Ab}{Ab}
\DeclareMathOperator{\F}{F}
\DeclareMathOperator{\II}{II}
\DeclareMathOperator{\trace}{tr}
\newcommand{\Thomp}{\mathscr{T}}
\newcommand{\Thkern}{\mathscr{K}}
\numberwithin{equation}{section}
\begin{document}

\title{Von Neumann algebras of Thompson-like groups from cloning systems}
\date{\today}
\subjclass[2020]{Primary 46L10;   
                 Secondary 20F65} 

\keywords{Group von Neumann algebra, type $\II_1$ factor, McDuff factor, ICC, inner amenable, Thompson group, cloning system}

\author[E.~Bashwinger]{Eli Bashwinger}
\address{Department of Mathematics and Statistics, University at Albany (SUNY), Albany, NY 12222}
\email{ebashwinger@albany.edu}

\author[M.~C.~B.~Zaremsky]{Matthew C.~B.~Zaremsky}
\address{Department of Mathematics and Statistics, University at Albany (SUNY), Albany, NY 12222}
\email{mzaremsky@albany.edu}

\begin{abstract}
We prove a variety of results about the group von Neumann algebras associated to Thompson-like groups arising from so called $d$-ary cloning systems. Cloning systems are a framework developed by Witzel and the second author, with a $d$-ary version subsequently developed by Skipper and the second author, which can be used to construct generalizations of the classical Thompson's groups $F$, $T$, and $V$. Given a family of groups $(G_n)_{n\in\N}$ with a $d$-ary cloning system, we get a Thompson-like group $\Thomp_d(G_*)$, and in this paper we find some mild, natural conditions under which the group von Neumann algebra $\vN(\Thomp_d(G_*))$ has desirable properties. For instance, if the $d$-ary cloning system is ``fully compatible'' and ``diverse'' then we prove that $\vN(\Thomp_d(G_*))$ is a type $\II_1$ factor. If moreover the $d$-ary cloning system is ``uniform'' and ``slightly pure'' then we prove $\vN(\Thomp_d(G_*))$ is even a McDuff factor, so $\Thomp_d(G_*)$ is inner amenable. Examples of $d$-ary cloning systems satisfying these conditions are easy to come by, and include many existing examples, for instance our results show that for $bV$ and $bF$ the Brin--Dehornoy braided Thompson group and pure braided Thompson group, $\vN(bV)$ and $\vN(bF)$ are type $\II_1$ factors and $\vN(bF)$ is McDuff. In particular we get the surprising result that $bF$ is inner amenable.
\end{abstract}

\maketitle
\thispagestyle{empty}

\section*{Introduction}

The main goal of this paper is to produce an array of new examples of group von Neumann algebras that are type $\II_1$ factors, and even McDuff factors. The examples come from groups denoted $\Thomp_d(G_*)$, which are members of the extended family of Thompson's groups, and arise by finding a so called $d$-ary cloning system on a family of groups $(G_n)_{n\in\N}$. Cloning systems were developed by the second author with Stefan Witzel in \cite{witzel18} (see also \cite{witzel19} and \cite{zaremsky18user}), and generalized to $d$-ary cloning systems by the second author and Rachel Skipper in \cite{skipper21}. They have since proved useful in a variety of contexts, for example producing simple groups separated by finiteness properties \cite{skipper19}, inspecting inheritance properties of (bi-)orderability \cite{ishida18}, and producing potential counterexamples to the conjecture that every co$\mathcal{CF}$ group embeds into Thompson's group $V$ \cite{berns-zieve18}.

The group von Neumann algebras of the classical Thompson groups $F$, $T$, and $V$ are all type $\II_1$ factors, as the groups are ICC (meaning every non-trivial conjugacy class is infinite). The group von Neumann algebra of $F$ was shown to even be a McDuff factor by Jolissaint in \cite{jolissaint98}, and so $F$ is inner amenable. This general picture informs our results. Thompson-like groups arising from $d$-ary cloning systems tend to be ``$F$-like'' or ``$V$-like'' (or somewhere in between, although to some extent this should also be considered $V$-like). We show that any $\Thomp_d(G_*)$ satisfying some mild hypotheses is ICC and so yields a type $\II_1$ factor, and the $F$-like ones even yield McDuff factors. In particular this shows that the $F$-like ones are inner amenable. Note that $T$ and $V$ are not inner amenable, as proved by Haagerup--Olesen \cite{haagerup17}.

More precisely, our main results are as follows. First we see that, in the ``fully compatible'' case (Definition~\ref{def:fully_compatible}), if the $G_n$ themselves are ICC then $\Thomp_d(G_*)$ is too:

\begin{main:inherit_ICC}
Let $((G_n)_{n\in\N},(\rho_n)_{n\in\N},(\clone_k^n)_{k\le n})$ be a fully compatible $d$-ary cloning system. If all the $G_n$ are ICC then so is $\Thomp_d(G_*)$.
\end{main:inherit_ICC}

Even if the $G_n$ are not ICC (as in many easy examples), adding a natural hypothesis that the $d$-ary cloning system is ``diverse'' (Definition~\ref{def:diverse}) implies that $\Thomp_d(G_*)$ is nonetheless ICC:

\begin{main:diverse_ICC}
Let $((G_n)_{n\in\N},(\rho_n)_{n\in\N},(\clone_k^n)_{k\le n})$ be a $d$-ary cloning system. Assume that it is fully compatible and diverse. Then $\Thomp_d(G_*)$ is ICC.
\end{main:diverse_ICC}

Finally, in the fully compatible, ``slightly pure'' case (Definition~\ref{def:slightly_pure}), an additional natural assumption called ``uniformity'' (Definition~\ref{def:uniform}) ensures that the group von Neumann algebra $\vN(\Thomp_d(G_*))$ of $\Thomp_d(G_*)$ is McDuff:

\begin{main:thomp_mcduff}
Let $((G_n)_{n\in\N},(\rho_n)_{n\in\N},(\clone_k^n)_{k\le n})$ be a fully compatible, slightly pure, uniform $d$-ary cloning system. Assume either that all the $G_n$ are ICC, or that the $d$-ary cloning system is diverse (so in either case $\Thomp_d(G_*)$ is ICC). Then $\vN(\Thomp_d(G_*))$ is a McDuff factor and $\Thomp_d(G_*)$ is inner amenable.
\end{main:thomp_mcduff}

It is notable that whether the $G_n$ are ICC or not, or whether the $\vN(G_n)$ are McDuff or not, does not matter much, as long as these mild conditions on the $d$-ary cloning systems are met. For example, we prove that the braided Thompson group $bF$ (see Subsection~\ref{sec:braid}) is inner amenable and $\vN(bF)$ is a McDuff factor, even though pure braid groups are not even ICC. We also prove that a close relative of $V$ denoted $\widehat{V}$ (Example~\ref{ex:Vhat}) is inner amenable, somewhat surprisingly.

We remark that groups yielding McDuff factors are somewhat rare among groups studied in geometric group theory, so Thompson-like groups are unusual in this regard. For example, a large class of groups studied in geometric group theory are acylindrically hyperbolic, and Dahmani--Guirardel--Osin proved in \cite[Theorem~8.14]{dahmani17} that acylindircally hyperbolic ICC groups cannot be inner amenable, and so cannot yield McDuff type $\II_1$ factors.

This paper is organized as follows. In Section~\ref{sec:vN} we recall the relevant background on group von Neumann algebras, type $\II_1$ factors, and McDuff factors, and in Section~\ref{sec:cloning} we recall the background on $d$-ary cloning systems. In Section~\ref{sec:cloning_ICC} we prove Theorems~\ref{thrm:inherit_ICC} and~\ref{thrm:diverse_ICC}, and in Section~\ref{sec:examples} we present many resulting examples of type $\II_1$ factors. In Section~\ref{sec:mcduff} we prove Theorem~\ref{thrm:thomp_mcduff}, and present many resulting examples of McDuff factors. Finally, in Section~\ref{sec:questions} we pose a number of naturally arising questions.

\subsection*{Acknowledgments} We are very grateful to Jon Bannon for myriad helpful discussions on the general theory of von Neumann algebras. The second author is supported by grant \#635763 from the Simons Foundation.

\section{Group von Neumann algebras}\label{sec:vN}

In this section we recall some background on group von Neumann algebras. Unless specified otherwise, everything in this section is drawn from \cite{jolissaint98} and \cite{picioroaga06}. A \emph{von Neumann algebra} $M$ is a $*$-subalgebra of bounded linear operators on some Hilbert space that is closed with respect to the weak operator topology. A robust source of von Neumann algebras is given by the group von Neumann algebra $\vN(G)$ of a countable discrete group $G$. (In what follows, not every result will be dependent on $G$ being countable, but enough of them will be that we will just impose a general rule that all groups we consider here are countable.) This is constructed as follows. Let $\ell^2(G)$ be the Hilbert space
\[
\ell^2(G)\defeq \{\psi\colon G\to\C\mid \sum\limits_{g\in G}|\psi(g)|^2<\infty\}
\]
with inner product $\langle \phi,\psi\rangle \defeq \sum\limits_{g\in G}\phi(g)\overline{\psi(g)}$. Clearly $G$ acts faithfully on $\ell^2(G)$ by bounded linear (unitary) operators, so $\C[G]$ is a subalgebra of the algebra $\mathcal{B}(\ell^2(G))$ of bounded linear operators.

\begin{definition}[Group von Neumann algebra]
The \emph{group von Neumann algebra} $\vN(G)$ of $G$ is the closure of $\C[G]$ in $\mathcal{B}(\ell^2(G))$ with respect to the weak operator topology.
\end{definition}

Let $\delta_e \in \ell^2(G)$ be the usual function sending the identity $e\in G$ to $1$ and all other $g$ to $0$. Define a faithful, normal trace $\trace\colon \vN(G)\to \C$ via $\trace(L)\defeq \langle L(\delta_e),\delta_e\rangle$. From this we get the Hilbertian norm $||L||_2\defeq \sqrt{\trace(L^* L)}$. Note that for any $e\ne g\in G$ we have $\trace(g)=0$, and for any $g\ne h$ in $G$ we have $||g-h||_2=\sqrt{2}$.

\subsection{Type $\II_1$ factors and ICC}\label{sec:ICC}

An especially important kind of von Neumann algebra is the following:

\begin{definition}[Type $\II_1$ factor]
A von Neumann algebra $M$ is called a \emph{factor} if its center is trivial, meaning the only elements of $M$ commuting with every element of $M$ are the scalar multiples of the identity operator. A factor is of \emph{type $\II_1$} if it is infinite dimensional and admits a trace state. See, e.g., \cite{popa07} for many more details on type $\II_1$ factors, and their applications in a wide array of fields.
\end{definition}

It turns out (given that we are restricting to countable groups) that $\vN(G)$ is a type $\II_1$ factor if and only if $G$ is ICC, defined as follows.

\begin{definition}
A group is said to be \emph{ICC} if the conjugacy class of every non-trivial element is infinite. Since an element has finite conjugacy class if and only if its centralizer has finite index, a group being ICC is equivalent to the property that the only element whose centralizer has finite index is the identity. This is turn is equivalent to the property that every finite index subgroup has trivial center.
\end{definition}

Let us discuss how the ICC property behaves under group extensions. The following criterion will be very useful for our situation, and is obtained by combining two results from \cite{preaux13}.

\begin{lemma}\label{lem:preaux_criterion}
Let $G$ be a group and $K$ a normal subgroup of $G$. Suppose that $G/K$ is ICC and that the following holds: for any subgroup $N$ of $K$ that is normal in $G$, if either $N$ is finite, or if $N$ is isomorphic to $\Z^n$ ($n\ge 0$) and the map $G\to\GL_n(\Z)$ induced by the conjugation action of $G$ on $N$ has finite image, then $N$ is trivial. (For example if the only finitely generated subgroup of $K$ normal in $G$ is the trivial one, then this condition holds.) Then $G$ is ICC.
\end{lemma}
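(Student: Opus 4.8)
The plan is to leverage the cited results from Préaux via a contradiction argument built around the finite conjugacy (FC) center. Suppose for contradiction that $G$ is not ICC, so some nontrivial $g \in G$ has finite conjugacy class, equivalently the FC-center $\mathrm{FC}(G)$ — the set of elements with finite conjugacy class — is nontrivial. The FC-center is a characteristic subgroup of $G$, and my first move would be to understand how it interacts with the extension $1 \to K \to G \to G/K \to 1$. The key structural input (one of the two results of Préaux I would invoke) is that the FC-center of a group is locally finite-by-abelian, and more precisely that the torsion-free part behaves like a finitely generated free abelian group on which $G$ acts with controlled (finite) image; the torsion part is contained in a finite characteristic subgroup. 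This is exactly the dichotomy — ``finite'' versus ``$\Z^n$ with finite image in $\GL_n(\Z)$'' — that the hypothesis of the Lemma is designed to rule out.

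\textbf{The main reduction.} First I would show that the image of $\mathrm{FC}(G)$ in $G/K$ must be trivial, using that $G/K$ is ICC: the image of a finite-conjugacy-class element under a quotient still has finite conjugacy class, so it lands in $\mathrm{FC}(G/K)$, which is trivial since $G/K$ is ICC. Hence $\mathrm{FC}(G) \subseteq K$. Now $\mathrm{FC}(G)$ is characteristic in $G$, hence normal in $G$, and it is a subgroup of $K$. At this point I apply the structural theory of FC-groups to $\mathrm{FC}(G)$: its torsion elements form a characteristic subgroup, and the quotient by torsion embeds as a normal subgroup isomorphic to some $\Z^n$ with $G$ acting through a finite subgroup of $\GL_n(\Z)$ (this finiteness is the content of the second Préaux result, reflecting that FC-center elements have finite orbits under conjugation).

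\textbf{Extracting the forbidden subgroups.} From $\mathrm{FC}(G) \ne 1$, I would produce a nontrivial subgroup $N \le K$ normal in $G$ of one of the two forbidden types. If $\mathrm{FC}(G)$ contains a nontrivial torsion element, then the (possibly finite) torsion subgroup of $\mathrm{FC}(G)$ — or a suitable finite characteristic piece of it — gives a nontrivial finite normal subgroup $N$, contradicting the hypothesis. If $\mathrm{FC}(G)$ is torsion-free and nontrivial, then it contains a finitely generated nontrivial subgroup, and by the FC-structure theory one obtains a nontrivial $N \cong \Z^n$, normal in $G$, with the conjugation map $G \to \GL_n(\Z)$ having finite image; again this contradicts the hypothesis. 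Either way the hypothesis forces $N$ to be trivial, contradicting $\mathrm{FC}(G) \ne 1$. The parenthetical remark in the statement — that it suffices for the only finitely generated subgroup of $K$ normal in $G$ to be trivial — then follows immediately, since both $N$ types are finitely generated.

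\textbf{The main obstacle} I anticipate is the careful bookkeeping in passing from ``$\mathrm{FC}(G) \ne 1$'' to ``there exists a \emph{finitely generated} normal subgroup of one of the two precise types.'' The FC-center itself need not be finitely generated, so one cannot directly feed it into the hypothesis; the real work is isolating a finitely generated normal piece while preserving both normality in $G$ and the finite-image condition on the $\GL_n(\Z)$-action. I expect this to hinge on choosing a single nontrivial element $g \in \mathrm{FC}(G)$ and passing to the normal closure of the finite $G$-orbit of $g$, then analyzing that the resulting finitely generated FC-subgroup splits (up to finite index or finite kernel) into the torsion and free-abelian cases. Stating precisely which two theorems of \cite{preaux13} combine to give this, and verifying the finite-image condition transfers correctly to the chosen $N$, is where the proof will need the most care.
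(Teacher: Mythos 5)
Your proposal is correct and takes essentially the same route as the paper: the paper's proof also reduces everything to Pr\'eaux's analysis of the finite-conjugacy subgroup $FC_G(K)$, citing his Proposition~1.1 (which packages exactly the ``torsion piece is finite normal, torsion-free piece is $\Z^n$ with finite image in $\GL_n(\Z)$'' dichotomy you describe) together with his Th\'eor\`eme~2.3, whose condition on the quotient is trivially satisfied because $G/K$ is ICC. Your direct observation that ICC-ness of $G/K$ forces $\mathrm{FC}(G)\le K$, followed by extracting a finitely generated forbidden normal subgroup from the normal closure of a single finite conjugacy class, is just an unpacked version of the same two citations, so there is nothing substantively different to compare.
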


\begin{proof}
Following \cite{preaux13}, let $FC_G(K)\defeq \{g\in K\mid g$ has finitely many conjugates by elements of $G\}$. Our assumptions ensure that $FC_G(K)=\{1\}$, by \cite[Proposition~1.1]{preaux13}. Now consider \cite[Theorem~2.3]{preaux13}, with $Q=G/K$. Since $FC_G(K)=\{1\}$, condition (i) is satisfied. Since $Q$ is ICC, condition (ii) is also satisfied for trivial reasons.
\end{proof}

\subsection{McDuff factors}\label{sec:mcduff_def}

Let us conclude this section with a discussion of McDuff factors, central sequences, and inner amenability.

\begin{definition}[(Relative) McDuff]
A type $\II_1$ factor $M$ is \emph{McDuff} if $M\cong M\otimes R$, where $R$ is the hyperfinite $\II_1$ factor, e.g., the von Neumann algebra of $S_\infty$ (or of any non-trivial amenable ICC group). A pair of $\II_1$ factors $N\subseteq M$ has the \emph{relative McDuff property} if there is an isomorphism $M\to M\otimes R$ restricting to an isomorphism $N\to N\otimes R$.
\end{definition}

Note that if a pair $N\subseteq M$ has the relative McDuff property then in particular each of $M$ and $N$ is McDuff. The McDuff property can also be phrased in terms of central sequences.

\begin{definition}[Central sequence]
Let $M$ be a type $\II_1$ factor. A sequence $(a_n)_{n\in\N} \in \ell^\infty(\N,M)$ is called \emph{central} if
\[
\lim_{n\to\infty}||aa_n-a_n a||_2=0
\]
for all $a\in M$. Two central sequences $(a_n)_{n\in\N}$ and $(b_n)_{n\in\N}$ are \emph{equivalent} if
\[
\lim_{n\to\infty}||a_n-b_n||_2=0 \text{.}
\]
A central sequence is \emph{trivial} if it is equivalent to a scalar sequence.
\end{definition}

It turns out $M$ is McDuff if and only if it admits a pair of non-commuting, non-trivial central sequences \cite{mcduff70}. Related to the McDuff property is property $\Gamma$ of Murray and von Neumann \cite{murray43}:

\begin{definition}[Property $\Gamma$]
We say $M$ has \emph{property $\Gamma$} if there exists a non-trivial central sequence.
\end{definition}

In particular McDuff implies property $\Gamma$. For an ICC group $G$, if $\vN(G)$ has property $\Gamma$ then $G$ is inner amenable \cite{effros75}, meaning there is a conjugation-invariant mean on $G\setminus\{e\}$. For completeness, we remark that one can define inner amenability without restricting to ICC groups as follows:

\begin{definition}[Inner amenable]
A group is called \emph{inner amenable} if it admits an atomless, conjugation-invariant mean.
\end{definition}

\section{Cloning systems}\label{sec:cloning}

In \cite{witzel18}, the second author and Witzel introduced the notion of a \emph{cloning system} on a family of groups $(G_n)_{n\in\N}$. Given a family $(G_n)_{n\in\N}$ with a cloning system, one can construct a Thompson-like group $\Thomp(G_*)$ that contains all the $G_n$ as natural subgroups in a particularly nice way. In particular, in many examples, finiteness properties of the $G_n$ carry over to $\Thomp(G_*)$.

In \cite{skipper21}, the second author and Skipper expanded the construction to \emph{$d$-ary cloning systems}, which is the generality we will use here (when $d=2$ the original construction is recovered). Let us recall the definition here.

\begin{definition}[$d$-ary cloning system]\label{def:cloning}
Let $d\ge 2$ be an integer and $(G_n)_{n\in\N}$ be a family of groups. For each $n\in\N$ let $\rho_n\colon G_n\to S_n$ be a homomorphism to the symmetric group $S_n$, called a \emph{representation map}. For each $1\le k\le n$ let $\clone_k^n \colon G_n \to G_{n+d-1}$ be an injective function (not necessarily a homomorphism), called a \emph{$d$-ary cloning map}. We write $\rho_n$ to the left of its input and $\clone_k^n$ to the right of its input, for reasons of visual clarity. Now we call the triple
\[
((G_n)_{n\in\N},(\rho_n)_{n\in\N},(\clone_k^n)_{k\le n})
\]
a \emph{$d$-ary cloning system} if the following axioms hold:\\
\indent\textbf{(C1):} (Cloning a product) $(gh)\clone_k^n = (g)\clone_{\rho_n(h)k}^{n} (h)\clone_k^n$\\
\indent\textbf{(C2):} (Product of clonings) $\clone_\ell^n \circ \clone_k^{n+d-1} = \clone_k^n \circ \clone_{\ell+d-1}^{n+d-1}$\\
\indent\textbf{(C3):} (Compatibility) $\rho_{n+d-1}((g)\clone_k^n)(i) = (\rho_n(g))\symmclone_k^n(i)$ for all $i\ne k,k+1,\dots,k+d-1$.

Here we always have $1\le k<\ell\le n$ and $g,h\in G_n$, and $\symmclone_k^n$ denotes the standard $d$-ary cloning maps for the symmetric groups, explained in \cite[Example~2.2]{skipper21}.
\end{definition}

Given a $d$-ary cloning system on a family of groups $(G_n)_{n\in\N}$ one gets a Thompson-like group, denoted $\Thomp_d(G_*)$, which can be viewed as a sort of ``Thompson-esque limit'' of the $G_n$. (When $d=2$ we will still write $\Thomp(G_*)$ for $\Thomp_2(G_*)$.) Let us recall the construction of $\Thomp_d(G_*)$. First, a \emph{$d$-ary tree} is a finite rooted tree in which each non-leaf vertex has $d$ children, and a \emph{$d$-ary caret} is a $d$-ary tree with $d$ leaves. An element of $\Thomp_d(G_*)$ is represented by a triple $(T_-,g,T_+)$ where $T_\pm$ are $d$-ary trees with the same number of leaves, say $n$, and $g$ is an element of $G_n$. In the future we may say ``tree'' instead of ``$d$-ary tree'' when the context is clear.

There is an equivalence relation on such triples, whose equivalence classes are the elements of $\Thomp_d(G_*)$. The equivalence relation is given by expansion and reduction: an \emph{expansion} of $(T_-,g,T_+)$ is a triple of the form $(T_-',(g)\clone_k^n,T_+')$ where $T_+'$ is $T_+$ with a $d$-ary caret added to the $k$th leaf and $T_-'$ is $T_-$ with a $d$-ary caret added to the $\rho_n(g)(k)$th leaf. A \emph{reduction} is the reverse of an expansion. Let us also recursively extend the definition of \emph{expansion (reduction)} to mean the result of any finite sequence of expansions (reductions). Given a tree $T$, let us also call an \emph{expansion} of $T$ the result of iteratively adding $d$-carets to leaves, finitely many times. Now declare that two triples are equivalent if we can get from one to the other via a finite sequence of expansions and reductions, and write $[T_-,g,T_+]$ for the equivalence class of $(T_-,g,T_+)$.

The group $\Thomp_d(G_*)$ is the set of equivalence classes $[T_-,g,T_+]$, but we have not yet explained the group operation. The idea is that given any two elements $[T_-,g,T_+]$ and $[U_-,h,U_+]$, up to expansions we can assume $T_+=U_-$. This is because any pair of $d$-ary trees have a common $d$-ary tree obtainable from either of them by adding $d$-ary carets to their leaves. Now the group operation on $\Thomp_d(G_*)$ is defined by
\[
[T_-,g,T_+][U_-,h,U_+] \defeq [T_-,gh,U_+]
\]
when $T_+=U_-$. The cloning axioms ensure that this is a well defined group operation. The identity is $[T,1,T]$ (for any $T$) and inverses are given by $[T_-,g,T_+]^{-1} = [T_+,g^{-1},T_-]$.

\begin{example}
The easiest examples of $d$-ary cloning systems are those yielding the Thompson-like groups $F_d$ and $V_d$. Here $V_d$ is the well known ``$d$-ary'' Higman--Thompson group, and $F_d$ is its ``$F$-like'' subgroup. If every $G_n$ is the trivial group $\{1\}$ (so the $\rho_n$ and $\clone_k^n$ are trivial too) then the $d$-ary Thompson-like group $\Thomp_d(\{1\})$ is $F_d$. Elements of $F_d$ are equivalence classes of the form $[T_-,1,T_+]$, or we can just write $[T_-,T_+]$. If $G_n=S_n$, $\rho_n$ is the identity, and $\clone_k^n=\symmclone_k^n$ for all $1\le k\le n$, then the $d$-ary cloning system $\Thomp_d(S_*)$ is $V_d$. Elements of $V_d$ are equivalence classes of the form $[T_-,\sigma,T_+]$ for $\sigma\in S_n$. (Doing the same thing restricted to $\Z/n\Z\cong \langle(1~2~\cdots~n)\rangle\le S_n$ yields the Higman--Thompson group $T_d$.) See Figure~\ref{fig:V} for an example of expansion using the ($2$-ary) cloning system yielding $V=V_2$.
\end{example}

\begin{figure}[htb]
 \begin{tikzpicture}[line width=1pt]\centering
  \draw (0,0) -- (2,2) -- (4,0)   (3,1) -- (2,0);
	
	\draw[dashed] (2,-3) -- (0,0);
	
	\draw[dashed] (0,-3) -- (4,0);
	
  \draw[dashed] (4,-3) -- (2,0);
	
	\filldraw (0,0) circle (1.5pt);
	\filldraw (2,0) circle (1.5pt);
	\filldraw (4,0) circle (1.5pt);
	
	\draw (0,-3) -- (2,-5) -- (4,-3)   (1,-4) -- (2,-3);
	\filldraw (0,-3) circle (1.5pt);
	\filldraw (2,-3) circle (1.5pt);
	\filldraw (4,-3) circle (1.5pt);
	
	\node at (5.5,-1.5) {$\longrightarrow$};
	
	\begin{scope}[xshift=8cm,yshift=1cm]	
   \draw (-1,-1) -- (2,2) -- (5,-1)   (3,1) -- (2,0)   (1,-1) -- (2,0) -- (3,-1);
	 
   \draw[dashed] (1,-4) -- (-1,-1);
	
   \draw[dashed] (-1,-4) -- (5,-1);
	
   \draw[dashed] (3,-4) -- (1,-1);
	
   \draw[dashed] (5,-4) -- (3,-1);
	
	 \filldraw (-1,-1) circle (1.5pt);
	 \filldraw (1,-1) circle (1.5pt);
	 \filldraw (3,-1) circle (1.5pt);
	 \filldraw (5,-1) circle (1.5pt);
	
	 \draw (-1,-4) -- (2,-7) -- (5,-4)   (0,-5) -- (1,-4)   (3,-4) -- (4,-5);
	 \filldraw (-1,-4) circle (1.5pt);
	 \filldraw (1,-4) circle (1.5pt);
	 \filldraw (3,-4) circle (1.5pt);
	 \filldraw (5,-4) circle (1.5pt);
	\end{scope}
 \end{tikzpicture}
 \caption{An example of an expansion using the ($2$-ary) cloning system defined above on $(S_n)_{n\in\N}$. Here we draw $(T_-,\sigma,T_+)$ by drawing $T_+$ upside-down and below $T_-$, with the permutation $\sigma$ indicated by dashed lines connecting the leaves. This makes the expansion move look quite natural. Since these pictures differ by an expansion, they represent the same element of $V=\Thomp_2(S_*)$.}\label{fig:V}
\end{figure}
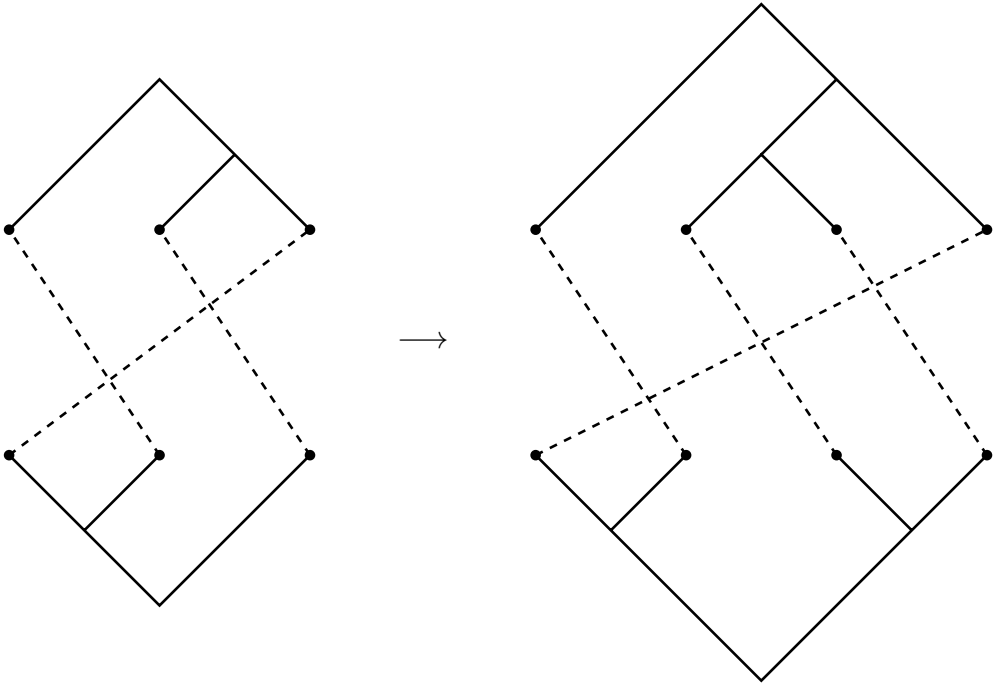

\begin{lemma}\label{lem:W_ICC}
Let $W_d$ be a subgroup of the Higman--Thompson group $V_d$ containing the commutator subgroup $[F_d,F_d]$. Then $W_d$ is ICC.
\end{lemma}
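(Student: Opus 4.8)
The plan is to show directly that every nontrivial $g \in W_d$ has an infinite conjugacy class in $W_d$, by producing infinitely many distinct conjugates of $g$ through conjugation by a carefully chosen infinite family of elements of $[F_d,F_d] \subseteq W_d$. Throughout I would use the faithful action of $V_d$, and hence of $W_d$, on the Cantor set $\mathfrak{C}_d = \{0,\dots,d-1\}^\N$ by prefix-replacement homeomorphisms, together with the basis of clopen cones $U_w = \{x \in \mathfrak{C}_d : w \text{ is a prefix of } x\}$.

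The first step is a standard ``moving cone'' observation. Since $g \neq 1$ acts faithfully, there is a point $x_0$ with $g(x_0) \neq x_0$; separating $x_0$ and $g(x_0)$ by disjoint clopen sets $P \ni x_0$ and $Q \ni g(x_0)$ and intersecting $P$ with $g^{-1}(Q)$, continuity produces a cone $U = U_w$ with $x_0 \in U$ and $g(U) \cap U = \emptyset$. Note that I do not need $U$ to avoid the ``endpoints'' $\bar 0$ and $\overline{d-1}$, which simplifies matters.

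The second step produces infinitely many elements of $[F_d,F_d]$ supported in $U$. The subgroup $F_U \le F_d$ of order-preserving elements supported in the cone $U$ is isomorphic to $F_d$ by self-similarity, hence is nonabelian, so I can pick $\phi, \psi \in F_U$ with $h \defeq [\phi,\psi] \neq 1$. Crucially, $h$ is an honest commutator of elements of $F_d$, so $h \in [F_d,F_d] \subseteq W_d$ no matter where its support lies; this is exactly what lets me avoid any delicate question about the precise abelianization of $F_d$. Since $\Supp(h) \subseteq U$ and $F_d$ is torsion-free, $h$ has infinite order and the powers $(h^i)_{i \in \N}$ are infinitely many distinct elements of $[F_d,F_d]$, each supported in $U$.

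The final step shows the conjugates $h^i g h^{-i}$ are pairwise distinct, which is the crux. Using $g(U) \cap U = \emptyset$, for any $k$ with $\Supp(k) \subseteq U$ and any $x \in U$ we have $k^{-1}(x) \in U$, so $g(k^{-1}(x)) \in g(U)$ lies outside $\Supp(k)$ and is therefore fixed by $k$; hence $(kgk^{-1})|_U = g \circ (k^{-1}|_U)$. Consequently, if $k, k'$ are supported in $U$ and $kgk^{-1} = k'gk'^{-1}$, then restricting to $U$ and cancelling the injection $g$ forces $k|_U = k'|_U$, whence $k = k'$. Thus the elements $h^i g h^{-i}$ for $i \in \N$ are distinct, so the conjugacy class of $g$ in $W_d$ is infinite; as $g$ was an arbitrary nontrivial element, $W_d$ is ICC. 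I expect the only real subtlety to be the bookkeeping that keeps the conjugating elements inside $[F_d,F_d]$ while the moving cone $U$ stays disjoint from its image $g(U)$ — the commutator trick resolves the former and the moving-cone observation the latter. (Alternatively, one could argue that a finite conjugacy class would force $g$ to centralize a finite-index subgroup of the infinite simple group $[F_d,F_d]$, hence all of it, and then rule out nontrivial centralizing elements; but the direct argument above is more self-contained.)
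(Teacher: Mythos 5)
Your proof is correct, but it takes a genuinely different route from the paper's. The paper argues contrapositively at the level of centralizers: it invokes the simplicity of $[F_d,F_d]$ (citing Brown) to conclude that the commutator subgroup has no proper finite-index subgroups, so any $g$ with finite-index centralizer in $W_d$ must centralize \emph{all} of $[F_d,F_d]$; it then uses the action of $V_d$ on $[0,1)$ by right-continuous bijections, together with elements of $[F_d,F_d]$ whose support is a prescribed interval $(a,b)$, to force such a $g$ to fix every $d$-adic rational and hence be trivial. This is essentially the ``alternative'' you sketch parenthetically at the end. Your main argument instead exhibits infinitely many distinct conjugates directly: the moving-cone observation, the commutator trick to manufacture an infinite-order element $h\in[F_d,F_d]$ supported in the cone $U$ with $g(U)\cap U=\emptyset$, and the computation $(kgk^{-1})|_U=g\circ(k^{-1}|_U)$ showing that supported conjugators are recovered from the conjugates. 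The trade-off is clear: the paper's proof is shorter but leans on the nontrivial theorem that $[F_d,F_d]$ is simple, whereas yours is entirely self-contained, using only faithfulness of the Cantor-set action, self-similarity of $F_d$ on cones, and torsion-freeness of $F_d$. Both proofs use $[F_d,F_d]$ only as a supply of elements with controlled support, which is exactly why the hypothesis $[F_d,F_d]\le W_d$ suffices. Each step of your argument checks out, including the key cancellation: since $\Supp(k)\subseteq U$ and $g(U)\cap U=\emptyset$, the point $g(k^{-1}(x))$ is fixed by $k$ for $x\in U$, and injectivity of $g$ then recovers $k^{-1}|_U$, hence $k$, from $kgk^{-1}$.
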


\begin{proof}
We need to show that if the centralizer in $W_d$ of $g\in W_d$ has finite index in $W_d$, then $g=1$. Since $[F_d,F_d]$ is simple (see, e.g., \cite[Theorem~4.13]{brown87}), it has no proper finite index subgroups, so any such $g$ commutes with every element of $[F_d,F_d]$. Consider the action of $V_d$ on $[0,1)$ by right-continuous bijections (see \cite[Section~6]{cannon96}). For any $a\in (0,1)\cap\Z[\frac{1}{d}]$ we can choose $b\in (0,1)\cap\Z[\frac{1}{d}]$ and $f\in [F_d,F_d]$ such that the support of $f$ in $(0,1)$ (that is, the set of non-fixed points) is precisely $(a,b)$. Since $g$ commutes with $f$, $g$ must stabilize $(a,b)$, and since $g$ acts by right-continuous bijections it must therefore fix $a$. Since $a$ was arbitrary, $g$ fixes every element of $(0,1)\cap\Z[\frac{1}{d}]$. The only element of $V_d$ doing this is the identity, so we are done.
\end{proof}

\subsection{Fully compatible and pure}\label{sec:pure}

Note that in the ``compatibility'' axiom in the definition of $d$-ary cloning system, we do not require the condition to hold for $i=k,k+1,\dots,k+d-1$. In some natural examples in fact it does not hold, e.g., for the $d$-ary cloning systems in \cite{skipper21} producing R\"over--Nekrashevych groups. However, assuming that the compatibility condition holds for all $i$ leads to some nice properties, e.g., the existence of a natural map $\Thomp_d(G_*)\to V_d$. Let us encode this into the following definition.

\begin{definition}[Fully compatible]\label{def:fully_compatible}
Call a $d$-ary cloning system \emph{fully compatible} if we have $\rho_{n+d-1}((g)\clone_k^n)(i) = (\rho_n(g))\symmclone_k^n(i)$ for all $1\le k\le n$ and all $1\le i\le n+d-1$ (even if $i=k,k+1,\dots,k+d-1$).
\end{definition}

Given a fully compatible $d$-ary cloning system, if $g\in\ker(\rho_n)$ then $(g)\clone_k^n \in \ker(\rho_{n+d-1})$ for any $1\le k\le n$. In particular, given a triple of the form $(T,g,T)$ for $g\in \ker(\rho_{n(T)})$ (here $n(T)$ is the number of leaves of $T$), if $T'$ is the tree obtained from $T$ by adding a $d$-caret to the $k$th leaf, then the expansion $(T',(g)\clone_k^n,T')$ of $(T,g,T)$ is again of this form. In particular this shows that the following is a subgroup of $\Thomp_d(G_*)$:
\[
\Thkern_d(G_*)\defeq \{[T,g,T]\mid g\in\ker(\rho_{n(T)})\} \text{.}
\]

\begin{lemma}[Map to $V_d$]\label{lem:map_to_Vd}
Given a fully compatible $d$-ary cloning system as above, there is a map $\pi\colon \Thomp_d(G_*)\to V_d$ given by sending $[T_-,g,T_+]$ to $[T_-,\rho_n(g),T_+]$, where $g\in G_n$. The kernel is $\Thkern_d(G_*)$ and the image is some group $W_d$ with $F_d\le W_d\le V_d$.
\end{lemma}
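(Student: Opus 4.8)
The plan is to verify directly that $\pi$ is a well-defined group homomorphism, then identify its kernel and image. I would proceed in four steps.

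First, I would check that $\pi$ is \emph{well-defined} on equivalence classes, which is where full compatibility does its work. A class $[T_-,g,T_+]$ with $g\in G_n$ has many representatives, differing by expansions and reductions, so I must show the formula $[T_-,\rho_n(g),T_+]$ is insensitive to these. It suffices to check invariance under a single expansion. An expansion of $(T_-,g,T_+)$ is $(T_-',(g)\clone_k^n,T_+')$, where $T_+'$ adds a $d$-caret at the $k$th leaf and $T_-'$ adds one at the $\rho_n(g)(k)$th leaf. Applying $\pi$ to this expanded triple gives $[T_-',\rho_{n+d-1}((g)\clone_k^n),T_+']$. By full compatibility (Definition~\ref{def:fully_compatible}), we have $\rho_{n+d-1}((g)\clone_k^n)=(\rho_n(g))\symmclone_k^n$ \emph{as permutations in $S_{n+d-1}$}, so this equals $[T_-',(\rho_n(g))\symmclone_k^n,T_+']$. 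But the latter is precisely an expansion of $[T_-,\rho_n(g),T_+]$ in the cloning system defining $V_d$ (where $G_n=S_n$ and $\clone_k^n=\symmclone_k^n$), hence they represent the same element of $V_d$. Thus $\pi$ respects expansions, and by symmetry reductions, so it descends to equivalence classes.

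Second, I would verify the \emph{homomorphism} property. Given two elements, after expanding we may assume they are $[T_-,g,T_+]$ and $[T_+,h,U_+]$ with matching middle tree (using that $\rho$ is applied to elements of the \emph{same} $G_n$ after a common refinement). The product is $[T_-,gh,U_+]$, so $\pi$ sends it to $[T_-,\rho_n(gh),U_+]=[T_-,\rho_n(g)\rho_n(h),U_+]$ since each $\rho_n$ is a homomorphism; and this equals the product $[T_-,\rho_n(g),T_+][T_+,\rho_n(h),U_+]$ in $V_d$. One subtlety I would flag: I must ensure the common refinement used here is compatible with the well-definedness from Step~1, but since $\pi$ is already well-defined this is automatic.

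Third, the \emph{kernel}. We have $[T_-,g,T_+]\in\ker\pi$ exactly when $[T_-,\rho_n(g),T_+]$ is trivial in $V_d$, i.e.\ equals $[T,1,T]$. After a common refinement this forces $\rho(g')=1$ for the refined element $g'$, so $T_-=T_+$ (the underlying tree-pair must be trivial in $V_d$) and $g'\in\ker(\rho)$. This is precisely the description of $\Thkern_d(G_*)$, which was already shown to be a subgroup in the discussion preceding the lemma. Finally, for the \emph{image}: the formula shows $F_d\le W_d$ since every element $[T_-,1,T_+]$ of $F_d$ is hit by $[T_-,1,T_+]\in\Thomp_d(G_*)$ (the identity of each $G_n$ maps to the identity of $S_n$), and $W_d\le V_d$ trivially. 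So $\pi$ is a homomorphism with $F_d\le\operatorname{im}\pi=W_d\le V_d$ and $\ker\pi=\Thkern_d(G_*)$.

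The main obstacle is the well-definedness in Step~1: this is the unique place where the \emph{full} compatibility hypothesis (as opposed to the weaker axiom (C3), which excludes the indices $i=k,\dots,k+d-1$) is genuinely needed, since the permutation equality $\rho_{n+d-1}((g)\clone_k^n)=(\rho_n(g))\symmclone_k^n$ must hold on \emph{all} of $\{1,\dots,n+d-1\}$ for the two sides to name the same element of $V_d$. Everything else is routine bookkeeping with the equivalence relation.
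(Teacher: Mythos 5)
Your proof is correct and is precisely the direct verification that the paper delegates to a citation: the paper's entire proof reads ``When $d=2$ this is \cite[Lemma~3.2]{witzel18}, and the general case works analogously.'' You correctly isolate full compatibility as the one non-routine ingredient (needed so that $\rho_{n+d-1}((g)\clone_k^n)$ and $(\rho_n(g))\symmclone_k^n$ agree as permutations on \emph{all} indices, making $\pi$ respect expansions), and the remaining steps on the homomorphism property, kernel, and image are the standard bookkeeping one would find in the cited $d=2$ argument.
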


\begin{proof}
When $d=2$ this is \cite[Lemma~3.2]{witzel18}, and the general case works analogously.
\end{proof}

Note that the ``cloning a product'' axiom ensures that $(1)\clone_k^n=1$ for all $k$ and $n$, so any triple equivalent to $(T_-,1,T_+)$ is of the form $(T_-',1,T_+')$, i.e., the middle entry being the identity is an invariant of the equivalence relation. This implies that the set of elements of $\Thomp_d(G_*)$ of the form $[T_-,1,T_+]$ forms a subgroup, and it is easy to see that it is isomorphic to the generalized Thompson group $F_d$. Note that in the fully compatible situation, the image of this copy of $F_d$ in $\Thomp_d(G_*)$ maps under $\pi$ isomorphically to the standard copy of $F_d$ inside of $W_d$.

The easiest way to ensure full compatibility is if all the $\rho_n$ are trivial, which gives us the following definition:

\begin{definition}[Pure]\label{def:pure}
Call a $d$-ary cloning system \emph{pure} if every $\rho_n\colon G_n\to S_n$ is trivial.
\end{definition}

It is clear that a pure $d$-ary cloning system is fully compatible, and that in this case $\pi\colon \Thomp_d(G_*)\to V_d$ has image $F_d$ and splits, i.e., $\Thomp_d(G_*)=\Thkern_d(G_*)\rtimes F_d$.

We will sometimes informally refer to Thompson-like groups arising from pure $d$-ary cloning systems as ``$F$-like'' and those arising from $d$-ary cloning systems in which the $\rho_n\colon G_n\to S_n$ are surjective as ``$V$-like''.

\section{Cloning systems and ICC}\label{sec:cloning_ICC}

In this section we present some sufficient conditions under which a Thompson-like group $\Thomp_d(G_*)$ is ICC. The main results are Theorems~\ref{thrm:inherit_ICC} and~\ref{thrm:diverse_ICC}. We also discuss some natural situations in which $\Thomp_d(G_*)$ is not ICC.

\subsection{ICC Thompson-like groups}\label{sec:icc_examples}

First let us show that if the $G_n$ happen to be ICC and full compatibility holds, then $\Thomp_d(G_*)$ is ICC.

For a tree $T$, let $n(T)$ be the number of leaves of $T$. Define the subgroup
\[
G_T\defeq \{[T,g,T]\mid g\in G_{n(T)}\} \le \Thomp_d(G_*)\text{.}
\]
Note that $g\mapsto [T,g,T]$ is an isomorphism $G_{n(T)}\to G_T$. Also define the subgroup
\[
K_T\defeq \{[T,g,T]\mid g\in\ker(\rho_{n(T)})\} \le \Thkern_d(G_*)\text{.}
\]

\begin{theorem}\label{thrm:inherit_ICC}
Let $((G_n)_{n\in\N},(\rho_n)_{n\in\N},(\clone_k^n)_{k\le n})$ be a fully compatible $d$-ary cloning system. If all the $G_n$ are ICC then so is $\Thomp_d(G_*)$.
\end{theorem}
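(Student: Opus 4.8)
The plan is to apply the criterion of Lemma~\ref{lem:preaux_criterion} with $G = \Thomp_d(G_*)$ and the normal subgroup $K = \Thkern_d(G_*)$. By Lemma~\ref{lem:map_to_Vd} the quotient $G/K$ is isomorphic to the image $W_d$, which satisfies $F_d \le W_d \le V_d$, and this group is ICC by Lemma~\ref{lem:W_ICC} (since $F_d \ge [F_d,F_d]$). So the quotient hypothesis of Lemma~\ref{lem:preaux_criterion} is immediate. The substance of the proof is therefore to verify the hypothesis on subgroups $N$ of $K = \Thkern_d(G_*)$ that are normal in all of $\Thomp_d(G_*)$: I must show that no such $N$ can be nontrivial and either finite, or isomorphic to $\Z^n$ with finite-image conjugation action.

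First I would try to show directly that the \emph{only} finitely generated subgroup of $\Thkern_d(G_*)$ that is normal in $\Thomp_d(G_*)$ is trivial, since the parenthetical remark in Lemma~\ref{lem:preaux_criterion} says this suffices. Take a nontrivial $g \in \ker(\rho_n) \le G_n$, represented in $\Thkern_d(G_*)$ as $[T,g,T]$ where $T$ has $n$ leaves. The key structural feature to exploit is that the copies $G_T$ (and their kernel versions $K_T$) sit inside $\Thomp_d(G_*)$, and that conjugating $[T,g,T]$ by suitable elements of $\Thomp_d(G_*)$ lets me ``move'' and ``replicate'' $g$ into disjoint regions of a large tree. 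Concretely, after expanding to a common large tree, one can find elements of $F_d \le \Thomp_d(G_*)$ that permute/rearrange the subtrees hanging off the leaves; conjugating $[T,g,T]$ by these produces infinitely many distinct elements supported on different ``branches,'' and these must all lie in any normal subgroup $N$ containing $[T,g,T]$. The aim is to show that a normal subgroup containing one nontrivial element of $\Thkern_d(G_*)$ must contain an infinite, non-finitely-generated family, and in fact cannot be abelian-by-finite either, thereby ruling out both the finite case and the $\Z^n$ case simultaneously.

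For the abelian/$\Z^n$ obstruction, the cleanest route is to use that $G_T \cong G_{n(T)}$ is ICC for every $T$, together with the compatibility of the cloning maps with expansion. Since $N \cap G_T$ is normal in $G_T$ whenever $N$ is normal in $\Thomp_d(G_*)$, and $G_T$ is ICC, a nontrivial element of $N$ lying in some $G_T$ has infinite $G_T$-conjugacy class, all of whose members lie in $N$; this forces $N$ to be infinite and strongly non-abelian in a way incompatible with $N \cong \Z^n$ having finite-image conjugation. I would make this precise by arguing that any nontrivial $N \trianglelefteq \Thomp_d(G_*)$ with $N \le \Thkern_d(G_*)$ contains, after expansion, a nontrivial element of some $K_T \le G_T$, and then leveraging ICC-ness of $G_{n(T)}$ to rule out $N$ being finite or virtually-$\Z^n$-with-finite-image.

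The main obstacle I expect is the bookkeeping around expansions: an element of $\Thkern_d(G_*)$ is an equivalence class, so ``$g$ lies in $G_T$'' is only true up to choosing a representative, and I must ensure that the conjugating elements I use to generate infinitely many distinct conjugates genuinely do so after accounting for the expansion/reduction equivalence (i.e., that the produced conjugates are not secretly equal as equivalence classes). Full compatibility is exactly what keeps $\Thkern_d(G_*)$ a subgroup stable under these expansions and keeps the $\rho_n$-kernel condition coherent, so I would lean on it to guarantee that cloning a kernel element stays in the kernel and that the branch-replication conjugations land back inside $\Thkern_d(G_*)$. Verifying that the infinitely many conjugates are genuinely distinct equivalence classes, and organizing this to simultaneously defeat both the finite and the $\Z^n$-with-finite-image possibilities, is where the real care is needed.
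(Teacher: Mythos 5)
Your proposal is correct in substance but takes a genuinely different route from the paper's own proof of this theorem. The paper does not invoke Lemma~\ref{lem:preaux_criterion} here at all: it uses the characterization of ICC via every finite-index subgroup having trivial center, takes $z\in Z(H)$ for $H$ of finite index in $\Thomp_d(G_*)$, notes that $\pi(H)$ has finite index in the ICC group $W_d$ so $\pi(z)=1$, hence $z\in\Thkern_d(G_*)$ and so $z$ lies in some $G_T$, and then concludes $z=1$ from $Z(H)\cap G_T\le Z(H\cap G_T)=\{1\}$, the latter because $H\cap G_T$ has finite index in $G_T\cong G_{n(T)}$, which is ICC. You instead run the Pr\'eaux criterion with $K=\Thkern_d(G_*)$, which is precisely the paper's strategy for Theorem~\ref{thrm:diverse_ICC}; the ingredients you need ($W_d$ ICC via Lemma~\ref{lem:W_ICC}, and $\Thkern_d(G_*)$ being the directed union of the $K_T\le G_T$ via full compatibility and Lemma~\ref{lem:embed_pure_parts}) are the same ones the paper uses, and your verification of the normal-subgroup hypothesis does go through: a nontrivial $x\in N$ lies in some $K_T\le G_T$, its $G_T$-conjugacy class is infinite (distinctness is automatic since $g\mapsto[T,g,T]$ is an isomorphism onto $G_T$, so no expansion bookkeeping is needed) and is contained in $N$ by normality, which contradicts both the finite case and the $\Z^n$-with-finite-image case, since either hypothesis forces every element of $N$ to have finite $\Thomp_d(G_*)$-conjugacy class. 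Two points of polish: the contradiction in the $\Z^n$ case is purely about conjugacy-class size (a finite image in $\GL_n(\Z)$ gives finite orbits), not about $N$ being ``strongly non-abelian'' --- $N$ could perfectly well be abelian a priori; and your middle paragraph's plan to replicate $g$ across branches by conjugating with elements of $F_d$ is an unnecessary detour whose distinctness issues (which you rightly flag) never need to be resolved, because the $G_T$-conjugation argument already suffices. Your approach buys uniformity with the proof of Theorem~\ref{thrm:diverse_ICC}, with ICC-ness of the $G_n$ playing the role that diversity plays there; the paper's approach buys a shorter, self-contained argument that sidesteps any analysis of normal subgroups of $\Thkern_d(G_*)$.
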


\begin{proof}
We need to show that every finite index subgroup $H$ of $\Thomp_d(G_*)$ has trivial center. The intersection of $H$ with any $G_T$ has finite index in $G_T$, which since $G_T\cong G_{n(T)}$ is ICC implies $Z(H\cap G_T)=\{1\}$. In particular $Z(H)\cap G_T=\{1\}$. Now we claim that every element of $Z(H)$ lies in some $G_T$, which will imply that $Z(H)=\{1\}$. Let $[T_-,g,T_+]\in Z(H)$. Since the cloning system is fully compatible, we have a short exact sequence
\[
1\to \Thkern_d(G_*) \to \Thomp_d(G_*) \stackrel{\pi}{\to} W_d \to 1
\]
for some $F_d\le W_d\le V_d$ by Lemma~\ref{lem:map_to_Vd}. The image $\pi(H)$ of $H$ in $W_d$ has finite index in $W_d$, and $W_d$ is ICC by Lemma~\ref{lem:W_ICC} since it contains $[F_d,F_d]$, so $\pi(H)$ has trivial center. Hence $\pi([T_-,g,T_+])=1$, i.e., $[T_-,g,T_+]\in \Thkern_d(G_*)$. Since $\Thkern_d(G_*)$ is a direct union of subgroups of the $G_T$, we conclude that indeed every element of $Z(H)$ lies in some $G_T$, and we are done.
\end{proof}

If the $G_n$ are not ICC, there is still quite a lot we can say. First we need the following useful lemma:

\begin{lemma}\label{lem:embed_pure_parts}
Let $((G_n)_{n\in\N},(\rho_n)_{n\in\N},(\clone_k^n)_{k\le n})$ be a fully compatible $d$-ary cloning system. If $T'$ is an expansion of $T$ then $K_T\le K_{T'}$.
\end{lemma}

\begin{proof}
Without loss of generality $T'$ is obtained from $T$ by adding a single $d$-caret to a leaf, say the $k$th leaf. Then for any $[T,g,T]\in K_T$ we have $[T,g,T]=[T',(g)\clone_k^{n(T)},T']$, which lies in $K_{T'}$ since full compatibility ensures that $g\in\ker(\rho_n)$ implies $(g)\clone_k^n\in\ker(\rho_{n+d-1})$.
\end{proof}

\begin{definition}[Diverse]\label{def:diverse}
Call a cloning system \emph{diverse} if for all sufficiently large $n$ we have
\[
\bigcap\limits_{k=1}^n(G_n)\clone_k^n = \{1\} \text{.}
\]
\end{definition}

The idea is, in a diverse cloning system, the different cloning maps $G_n\to G_{n+d-1}$ tend to send $G_n$ into different regions of $G_{n+d-1}$.

\begin{example}\label{ex:symm_diverse}
It is easy to see that the standard $d$-ary cloning system on the symmetric groups is diverse. If $\sigma\in S_{n+d-1}$ is in the image of $\symmclone_k^n$, then $\sigma(k+i)=\sigma(k)+i$ for all $1\le i\le d-1$. Hence if $\sigma$ is in the image of every $\symmclone_k^n$ it must be the identity.
\end{example}

\begin{lemma}\label{lem:no_normals}
Let $((G_n)_{n\in\N},(\rho_n)_{n\in\N},(\clone_k^n)_{k\le n})$ be a $d$-ary cloning system. Assume that it is fully compatible and diverse. If $N\le K_T$ is normal in $\Thomp_d(G_*)$ then $N=\{1\}$.
\end{lemma}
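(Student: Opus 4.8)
The plan is to encode normality of $N$ as a constraint on the images of the various cloning maps $\clone_k^{n}$ and then let diversity force triviality. Using the isomorphism $K_T\cong\ker(\rho_{n(T)})$ given by $[T,g,T]\mapsto g$, I would record $N$ as the image of a subgroup $M\le\ker(\rho_{n(T)})$. Since Lemma~\ref{lem:embed_pure_parts} gives $K_T\le K_{T'}$ for every expansion $T'$ of $T$, I may first replace $T$ by a large expansion (and $M$ by its image under the intervening cloning maps) so as to assume that $n\defeq n(T)$ is large enough for diversity to hold, i.e.\ $\bigcap_{k=1}^{n}(G_n)\clone_k^{n}=\{1\}$; proving $M$ trivial at this level proves $N$ trivial.

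The main step is a conjugation computation comparing two descriptions of the same elements of $N$. For $1\le j\le n$ let $T_j$ denote $T$ with a $d$-caret attached to its $j$th leaf, so $n(T_j)=n+d-1$. For $g\in M$ we have $g\in\ker(\rho_n)$, so expanding at leaf $j$ gives $[T,g,T]=[T_j,(g)\clone_j^{n},T_j]$, and full compatibility guarantees $(g)\clone_j^{n}\in\ker(\rho_{n+d-1})$. Fixing indices $k,\ell$ and conjugating by the element $[T_\ell,1,T_k]$ of the $F_d$ subgroup, the multiplication rule yields
\[
[T_\ell,1,T_k]\,[T_k,(g)\clone_k^{n},T_k]\,[T_k,1,T_\ell]=[T_\ell,(g)\clone_k^{n},T_\ell]\in N\text{.}
\]
On the other hand, because $N\le K_T\le K_{T_\ell}$ (again Lemma~\ref{lem:embed_pure_parts}) and $K_{T_\ell}\cong\ker(\rho_{n+d-1})$ via $[T_\ell,x,T_\ell]\mapsto x$, an element $[T_\ell,x,T_\ell]$ lies in $N$ exactly when $x\in(M)\clone_\ell^{n}$. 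Comparing the two descriptions gives $(g)\clone_k^{n}\in(M)\clone_\ell^{n}$ for every $g\in M$, that is, $(M)\clone_k^{n}\subseteq(M)\clone_\ell^{n}$.

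Since $k$ and $\ell$ are arbitrary, this inclusion is symmetric, so the set $M'\defeq(M)\clone_k^{n}$ is independent of $k$. Then $M'\subseteq(G_n)\clone_k^{n}$ for every $k$, hence $M'\subseteq\bigcap_{k=1}^{n}(G_n)\clone_k^{n}=\{1\}$ by diversity; injectivity of $\clone_k^{n}$ then forces $M=\{1\}$ and therefore $N=\{1\}$. I expect the only real obstacle to be bookkeeping rather than ideas: one must keep straight the two representatives of a given element of $N$---the intrinsic one, obtained by expanding $K_T$ at leaf $\ell$, whose middle term is constrained to lie in $(M)\clone_\ell^{n}$, versus the conjugated one, whose middle term visibly lies in $(G_n)\clone_k^{n}$---and it is precisely full compatibility together with Lemma~\ref{lem:embed_pure_parts} that ensures both representatives genuinely live in $K_{T_\ell}$, so that the comparison is legitimate. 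Once that is arranged, diversity supplies the conclusion immediately.
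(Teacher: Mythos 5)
Your proposal is correct and follows essentially the same route as the paper's proof: expand $T$ until diversity applies at level $n=n(T)$, conjugate $[T,g,T]=[T_k,(g)\clone_k^n,T_k]$ by the $F_d$-elements $[T_\ell,1,T_k]$ to conclude that $(g)\clone_k^n$ lies in the image of every $\clone_\ell^n$, and then invoke diversity plus injectivity of the cloning maps. The only cosmetic difference is that you track the slightly sharper containment $(M)\clone_k^n\subseteq(M)\clone_\ell^n$, whereas the paper only records $(g)\clone_k^n\in(G_n)\clone_\ell^n$, which is all that diversity requires.
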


\begin{proof}
Since the cloning system is diverse, up to replacing $T$ by an expansion, we can assume that $\bigcap\limits_{k=1}^n(G_{n(T)})\clone_k^n = \{1\}$ (here $n=n(T)$). By Lemma~\ref{lem:embed_pure_parts}, $K_T\le K_{T'}$ for any expansion $T'$ of $T$, so this step retains our assumption that $N\le K_T$. Let $[T,g,T]\in N$, so in particular every conjugate of $[T,g,T]$ in $\Thomp_d(G_*)$ lies in $K_T$. Let $T_k$ be $T$ with a $d$-caret added to the $k$th leaf of $T$. Now $[T,g,T]=[T_k,(g)\clone_k^n,T_k]$ for all $1\le k\le n$, where $n=n(T)$. Conjugating by $[T_k,T_\ell]$ for arbitrary $k$ and $\ell$ gives us that $[T_\ell,(g)\clone_k^n,T_\ell]\in K_T$ for all $k$ and $\ell$. Say $[T_\ell,(g)\clone_k^n,T_\ell]=[T,g',T]$. Then $[T_\ell,(g)\clone_k^n,T_\ell]=[T_\ell,(g')\clone_\ell^n,T_\ell]$, so $(g)\clone_k^n=(g')\clone_\ell^n$. Since such a $g'$ can be found for any $\ell$, we conclude that for all $1\le k\le n$ the element $(g)\clone_k^n$ of $G_{n+d-1}$ lies in the image of every $\clone_\ell^n$. Since the cloning system is diverse, this implies $(g)\clone_k^n=1$, so by injectivity $g=1$, hence $[T,g,T]=1$.
\end{proof}

Now we can prove:

\begin{theorem}\label{thrm:diverse_ICC}
Let $((G_n)_{n\in\N},(\rho_n)_{n\in\N},(\clone_k^n)_{k\le n})$ be a $d$-ary cloning system. Assume that it is fully compatible and diverse. Then $\Thomp_d(G_*)$ is ICC.
\end{theorem}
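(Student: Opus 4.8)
The plan is to apply the criterion from Lemma~\ref{lem:preaux_criterion} with $G = \Thomp_d(G_*)$ and $K = \Thkern_d(G_*)$. By Lemma~\ref{lem:map_to_Vd}, full compatibility gives us the short exact sequence
\[
1\to \Thkern_d(G_*) \to \Thomp_d(G_*) \stackrel{\pi}{\to} W_d \to 1
\]
with $F_d\le W_d\le V_d$, and by Lemma~\ref{lem:W_ICC} the quotient $W_d$ is ICC. So the hypothesis that $G/K$ is ICC is immediately satisfied. It then remains to verify the condition on subgroups $N\le K$ that are normal in $G$: namely that any such $N$ which is either finite, or isomorphic to some $\Z^n$ with finite-image conjugation action, must be trivial.

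My strategy for this is to invoke Lemma~\ref{lem:no_normals}, but there is a gap I would need to bridge first: Lemma~\ref{lem:no_normals} only rules out normal subgroups contained in a \emph{single} $K_T$, whereas $\Thkern_d(G_*)$ is the direct union of the $K_T$ over all trees $T$, and an arbitrary $N\le \Thkern_d(G_*)$ need not sit inside any one $K_T$. First I would observe that $\Thkern_d(G_*) = \bigcup_T K_T$ is a direct union (by Lemma~\ref{lem:embed_pure_parts} the $K_T$ form a directed system under expansion). So if $N$ is finitely generated, each of its finitely many generators lies in some $K_{T_i}$, and taking a common expansion $T$ of all the $T_i$ we get $N\le K_T$; then Lemma~\ref{lem:no_normals} forces $N=\{1\}$. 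This handles both cases in the criterion, since a finite group is finitely generated, and $\Z^n$ is finitely generated — so in fact I would observe the stronger statement that \emph{any} finitely generated subgroup of $K$ normal in $G$ is trivial, which is precisely the parenthetical sufficient condition flagged in Lemma~\ref{lem:preaux_criterion}.

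The main obstacle I anticipate is exactly this reduction from arbitrary $N$ to a single $K_T$; everything else is bookkeeping. The key point I must be careful about is that Lemma~\ref{lem:no_normals} requires $N$ to be normal in all of $\Thomp_d(G_*)$, not merely normal within $K_T$, and this normality is what the conjugation argument in that lemma's proof exploits. Since $N$ is assumed normal in $G$ from the outset, this is preserved under the reduction, so there is no issue. Thus the plan is: (1) cite the short exact sequence and $W_d$ being ICC to get the ICC quotient; (2) note $\Thkern_d(G_*)$ is a direct union of the $K_T$; (3) conclude any finitely generated $G$-normal $N\le \Thkern_d(G_*)$ lands in some $K_T$ and is trivial by Lemma~\ref{lem:no_normals}; (4) apply Lemma~\ref{lem:preaux_criterion} to finish.
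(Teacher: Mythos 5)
Your proposal is correct and follows essentially the same route as the paper's proof: apply Lemma~\ref{lem:preaux_criterion} to the pair $\Thomp_d(G_*)\trianglerighteq\Thkern_d(G_*)$, get the ICC quotient $W_d$ from Lemmas~\ref{lem:map_to_Vd} and~\ref{lem:W_ICC}, and reduce any finitely generated $G$-normal subgroup of $\Thkern_d(G_*)$ into a single $K_T$ via the directed union from Lemma~\ref{lem:embed_pure_parts} before invoking Lemma~\ref{lem:no_normals}. The ``gap'' you flag is exactly the step the paper handles the same way, so there is nothing to add.
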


\begin{proof}
We will use Lemma~\ref{lem:preaux_criterion}, applied to $\Thomp_d(G_*)$ and its normal subgroup $\Thkern_d(G_*)$. We need to show that $W_d\defeq \Thomp_d(G_*)/\Thkern_d(G_*)$ is ICC and that the only finitely generated subgroup of $\Thkern_d(G_*)$ that is normal in $\Thomp_d(G_*)$ is the trivial one. The fact that $W_d$ is ICC follows from Lemma~\ref{lem:W_ICC}, since $W_d$ contains $[F_d,F_d]$. For the second claim, first note that since $\Thkern_d(G_*)$ is the directed union of the $K_T$ (by full compatibility and Lemma~\ref{lem:embed_pure_parts}), every finitely generated subgroup of $\Thkern_d(G_*)$ lies in some $K_T$. Now the claim follows from Lemma~\ref{lem:no_normals}.
\end{proof}

\begin{remark}\label{rmk:subsystem}
Given a fully compatible, diverse $d$-ary cloning system on $(G_n)_{n\in\N}$, if $H_n\le G_n$ is a family of groups such that the $d$-ary cloning system on $(G_n)_{n\in\N}$ restricts to a $d$-ary cloning system on $(H_n)_{n\in\N}$, then the latter is also fully compatible and diverse. In particular if Theorem~\ref{thrm:diverse_ICC} reveals that $\Thomp_d(G_*)$ is ICC, then it also immediately reveals that any such $\Thomp_d(H_*)$ is ICC.
\end{remark}

Note that applying Theorem~\ref{thrm:diverse_ICC} to Example~\ref{ex:symm_diverse} shows that $V_d$ is ICC, though this is also easy to see without using Theorem~\ref{thrm:diverse_ICC}.

\subsection{Non-ICC Thompson-like groups}\label{sec:non_icc_examples}

Now let us point out that not every Thompson-like group arising from a cloning system is ICC, even for nice, non-pathological examples. The easiest example is the following:

Let $G$ be any group. Let $\Pi^n(G)$ be the direct product of $n$ copies of $G$. Let $\rho_n\colon \Pi^n(G)\to S_n$ be the trivial map, and for each $1\le k\le n$ let $\clone_k^n\colon \Pi^n(G)\to \Pi^{n+d-1}(G)$ be
\[
(g_1,\dots,g_n)\clone_k^n \defeq (g_1,\dots,g_{k-1},g_k,\dots,g_k,g_{k+1},\dots,g_n) \text{.}
\]
Clearly these data form a (pure) $d$-ary cloning system. For more on these examples in the $d=2$ case, see \cite[Section~6]{witzel18} and \cite{tanushevski16}. Note that for $G\ne\{1\}$, this $d$-ary cloning system is not diverse, since any $(g,\dots,g)$ lies in the image of every cloning map.

\begin{proposition}\label{prop:direct_product}
If $G$ has non-trivial center then so does $\Thomp_d(\Pi^*(G))$. In particular, in this case $\Thomp_d(\Pi^*(G))$ is not ICC.
\end{proposition}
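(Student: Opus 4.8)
The plan is to exhibit an explicit nontrivial central element of $\Thomp_d(\Pi^*(G))$ built from a nontrivial central element $z\in Z(G)$. The natural candidate is the ``constant diagonal'' element: for a tree $T$ with $n=n(T)$ leaves, consider the triple $(T,(z,\dots,z),T)$ where $(z,\dots,z)\in\Pi^n(G)$ is the tuple all of whose entries equal $z$. First I would check this defines a single element $\zeta_z\in\Thomp_d(\Pi^*(G))$, independent of the choice of $T$. The key point is that cloning preserves constant tuples: since $\clone_k^n$ simply repeats the $k$th coordinate, $(z,\dots,z)\clone_k^n=(z,\dots,z)$ (now of length $n+d-1$). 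Because the cloning system is pure, expanding $(T,(z,\dots,z),T)$ adds a caret at the same leaf of both trees and replaces the middle entry by a clone, so every expansion is again of the form $(T',(z,\dots,z),T')$. Since any two trees admit a common expansion, all the triples $(T,(z,\dots,z),T)$ are equivalent, and $\zeta_z$ is well defined.

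Next I would show $\zeta_z$ is central. Given an arbitrary element $[U_-,h,U_+]$ with $h\in\Pi^m(G)$ and $m=n(U_-)=n(U_+)$, I would use well-definedness to represent $\zeta_z$ both as $[U_-,(z,\dots,z),U_-]$ and as $[U_+,(z,\dots,z),U_+]$, with $(z,\dots,z)\in\Pi^m(G)$. Then the two products are computed on matching trees:
\[
\zeta_z\,[U_-,h,U_+]=[U_-,(z,\dots,z)h,U_+],\qquad [U_-,h,U_+]\,\zeta_z=[U_-,h(z,\dots,z),U_+].
\]
Since $z\in Z(G)$, we have $(z,\dots,z)h=h(z,\dots,z)$ coordinatewise in $\Pi^m(G)$, so the two products agree and $\zeta_z$ commutes with every element of $\Thomp_d(\Pi^*(G))$.

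Then I would argue $\zeta_z\neq 1$ whenever $z\neq 1$. As noted after Lemma~\ref{lem:map_to_Vd}, having trivial middle entry is an invariant of the equivalence relation (because $(1)\clone_k^n=1$), so $\zeta_z=1$ would force $(z,\dots,z)$ to be the identity tuple, i.e.\ $z=1$. Hence for $1\neq z\in Z(G)$ the element $\zeta_z$ is a nontrivial central element, so $Z(\Thomp_d(\Pi^*(G)))\neq\{1\}$. Finally, a nontrivial central element has conjugacy class of size one, which is finite, so $\Thomp_d(\Pi^*(G))$ is not ICC.

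The only delicate point is the bookkeeping with tree representatives in the first step: one must verify carefully that the constant-diagonal triple is genuinely independent of its representative and that representatives can always be aligned on common trees, which is exactly what lets the product computation in the second step go through cleanly. Once that is in place, centrality is immediate from the centrality of $z$, nontriviality is formal from the invariance of the middle entry, and the failure of ICC follows at once.
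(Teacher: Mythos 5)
Your proposal is correct and follows essentially the same route as the paper: both exhibit the constant-diagonal element $[T,(z,\dots,z),T]$ (the paper writes it as $[1,g,1]$), use the fact that cloning preserves constant tuples to see it is well defined independently of $T$, and then verify centrality via coordinatewise commutation with an arbitrary $[T_-,(g_1,\dots,g_n),T_+]$. Your explicit justification of nontriviality via the invariance of the trivial middle entry is a small welcome addition, but the argument is the same.
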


\begin{proof}
Let $1\ne g\in Z(G)$. We claim that the non-trivial element $[1,g,1]$ is central in $\Thomp_d(\Pi^*(G))$. First note that, thanks to the definition of the cloning maps, $[1,g,1]=[T,(g,\dots,g),T]$ for all $T$ (here the number of entries in the tuple is the number of leaves of $T$). In particular for any $[T_-,(g_1,\dots,g_n),T_+]\in \Thomp_d(\Pi^*(G))$, since $g\in Z(G)$ we have
\begin{align*}
&[T_-,(g_1,\dots,g_n),T_+][1,g,1][T_+,(g_1^{-1},\dots,g_n^{-1}),T_-] \\
&= [T_-,(g_1,\dots,g_n),T_+][T_+,(g,\dots,g),T_+][T_+,(g_1^{-1},\dots,g_n^{-1}),T_-]\\ 
&= [T_-,(g,\dots,g),T_-]=[1,g,1]\text{,}
\end{align*}
so $[1,g,1]$ is central.
\end{proof}

\section{Examples of ICC Thompson-like groups}\label{sec:examples}

Now that we have criteria that ensure $\Thomp_d(G_*)$ is ICC, let us discuss some concrete examples. Some of these $d$-ary cloning system examples have not appeared before in the literature (at least for $d>2$), so we will point out when a given example is new.

\subsection{Direct products with injective endomorphisms}\label{sec:twisted_direct_products}

Let $G$ be a (countable) group and
\[
\phi_1,\dots,\phi_d\colon G\to G
\]
a family of injective endomorphisms. Let $\Pi^n(G)$ be the direct product of $n$ copies of $G$ and let $\rho_n\colon \Pi^n(G)\to S_n$ be trivial. For $1\le k\le n$ let $\clone_k^n\colon \Pi^n(G)\to \Pi^{n+d-1}(G)$ be
\[
(g_1,\dots,g_n)\clone_k^n = (g_1,\dots,g_{k-1},\phi_1(g_k),\dots,\phi_d(g_k),g_{k+1},\dots,g_n) \text{.}
\]
It is straightforward to check that this is a (pure) $d$-ary cloning system. In the $d=2$ case these kinds of examples are essentially due to Tanushevski \cite{tanushevski16}, and were found later to fit into the cloning system framework. For $d>2$ we think this has technically not appeared before, but it is an obvious generalization. See Figure~\ref{fig:direct_product} for an example of an expansion using this cloning system. If the $\phi_i$ are all the identity, then this is the example from Subsection~\ref{sec:non_icc_examples}.

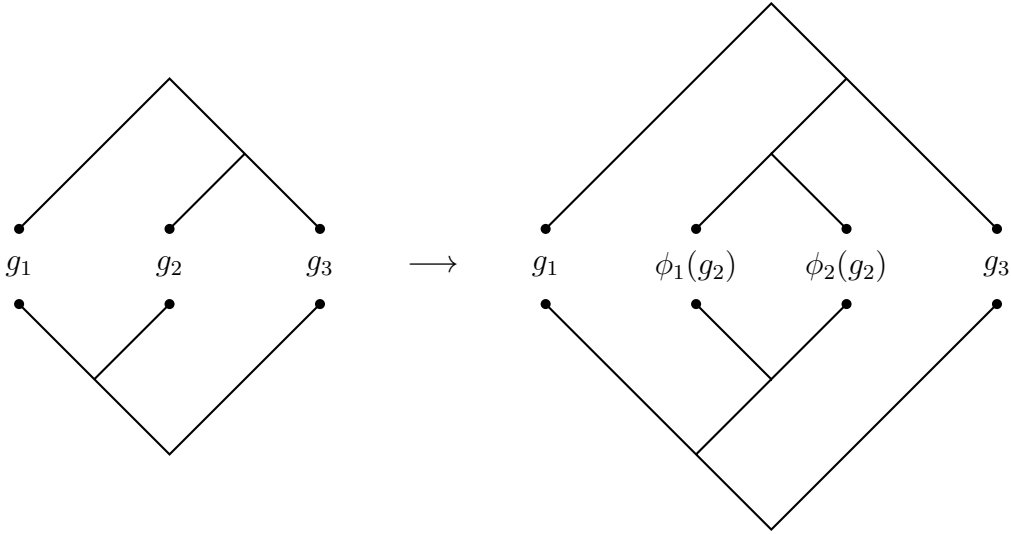
\begin{figure}[htb]
 \begin{tikzpicture}[line width=0.8pt]\centering
  \draw (0,0) -- (2,2) -- (4,0)   (3,1) -- (2,0);
	\filldraw (0,0) circle (1.5pt);
	\filldraw (2,0) circle (1.5pt);
	\filldraw (4,0) circle (1.5pt);
	\node at (0,-.5) {$g_1$};
	\node at (2,-.5) {$g_2$};
	\node at (4,-.5) {$g_3$};
	\draw (0,-1) -- (2,-3) -- (4,-1)   (1,-2) -- (2,-1);
	\filldraw (0,-1) circle (1.5pt);
	\filldraw (2,-1) circle (1.5pt);
	\filldraw (4,-1) circle (1.5pt);
	
	\node at (5.5,-.5) {$\longrightarrow$};
	
	\begin{scope}[xshift=8cm,yshift=1cm]	
   \draw (-1,-1) -- (2,2) -- (5,-1)   (3,1) -- (2,0)   (1,-1) -- (2,0) -- (3,-1);
	 \filldraw (-1,-1) circle (1.5pt);
	 \filldraw (1,-1) circle (1.5pt);
	 \filldraw (3,-1) circle (1.5pt);
	 \filldraw (5,-1) circle (1.5pt);
	 \node at (-1,-1.5) {$g_1$};
	 \node at (1,-1.5) {$\phi_1(g_2)$};
	 \node at (3,-1.5) {$\phi_2(g_2)$};
	 \node at (5,-1.5) {$g_3$};
	 \draw (-1,-2) -- (2,-5) -- (5,-2)   (1,-4) -- (2,-3)   (1,-2) -- (2,-3) -- (3,-2);
	 \filldraw (-1,-2) circle (1.5pt);
	 \filldraw (1,-2) circle (1.5pt);
	 \filldraw (3,-2) circle (1.5pt);
	 \filldraw (5,-2) circle (1.5pt);
	\end{scope}
 \end{tikzpicture}
 \caption{An example of an expansion using the ($2$-ary) cloning system defined above on $(\Pi^n(G))_{n\in\N}$. Here we draw $(T_-,(g_1,\dots,g_n),T_+)$ by drawing $T_+$ upside-down and below $T_-$, with the $g_i$ laid out between the leaves. This makes the expansion move look quite natural. Since these pictures differ by an expansion, they represent the same element of $\Thomp_2(\Pi^*(G))$.}\label{fig:direct_product}
\end{figure}

If $G$ is ICC then so is every $\Pi^n(G)$, so Theorem~\ref{thrm:inherit_ICC} says $\Thomp_d(\Pi^*(G))$ is ICC. This already provides a wealth of examples. If $G$ is not ICC, we can still use Theorem~\ref{thrm:diverse_ICC} to produce many examples, if we impose one extra assumption. Assume that the intersection of the images of all the $\phi_i$ is trivial. Now we claim that the cloning system is diverse. Indeed, for any $n\ge d$ suppose $(g_1,\dots,g_{n+d-1})$ lies in the image of every $\clone_k^n$. Then $g_d,\dots,g_n$ lie in the intersection of the images of every $\phi_i$, which means they are all trivial. Since $g_d=1$ and $\phi_d$ is injective, $g_1=\cdots=g_{d-1}=1$. Since $g_n=1$ and $\phi_1$ is injective, $g_{n+1}=\cdots=g_{n+d-1}=1$. We conclude $(g_1,\dots,g_{n+d-1})=(1,\dots,1)$, so the cloning system is diverse.

To summarize, Theorem~\ref{thrm:diverse_ICC} shows that $\Thomp_d(\Pi^*(G))$ is ICC, for any choice of $G$ and $\phi_i$, with the images of the $\phi_i$ intersecting trivially. Hence in this case $\vN(\Thomp_d(\Pi^*(G)))$ is a type $\II_1$ factor. We emphasize again that this works even if $G$ is not ICC.

\begin{example}
Let us point out some concrete examples of non-ICC groups admitting such $\phi_i$. These were suggested by Jim Belk, Yves Cornulier, Anthony Genevois, and Luc Guyot in answers to a question on mathoverflow.com \cite{mathoverflow21}. First we can take $G=\Z^\infty$ (the infinite direct sum), and it is easy to find injective endomorphisms $\phi_i\colon \Z^\infty\to\Z^\infty$ whose images intersect trivially. Hence, even though $\Z^\infty$ is abelian, and so essentially as far as possible from being ICC, nonetheless $\Thomp_d(\Pi^*(\Z^\infty))$ is ICC. For some finitely generated (and even finitely presented, indeed of type $\F_\infty$) examples we turn to Thompson-like groups themselves. The key is that for any $G$ containing $G\times G$ as a subgroup, it is possible to construct injective endomorphisms as above (for any $d$), so we just need a group $G$ that is finitely generated, has non-trivial center, and contains $G\times G$. For example if $G=F_d\times\Z$ this holds. Similarly one can use the \emph{ribbon Thompson group} $RV$ (see \cite[Subsection~3.5.3]{thumann17}), which has non-trivial center and contains $RV\times RV$. Another example is a group of the form $\Thomp_d(\Pi^*(H))$ itself, but built using the identity $H\to H$ for every $\phi_i$ (and such that $Z(H)\ne \{1\}$). Finally one can take $G$ to be the centralizer in $V$ of any finite order element. In all of these examples of $G$, we have that $G$ has non-trivial center and contains $G\times G$. Hence $G$ is not ICC but $\Thomp_d(\Pi^*(G))$ nonetheless is.
\end{example}

\subsection{(Pure) braid groups}\label{sec:braid}

The Brin--Dehornoy braided Thompson group $bV$, introduced independently by Brin \cite{brin07} and Dehornoy \cite{dehornoy06}, is a braided analog of Thompson's group $V$ that arises from a cloning system on the family of braid groups. Braid groups are not ICC, as they have non-trivial center, but as we will see in this subsection $bV$ is ICC. If one uses pure braid groups instead of braid groups, one gets the braided Thompson group $bF$, first introduced by Brady--Burillo--Cleary--Stein \cite{brady08}, and we will see that $bF$ is ICC as well. Historically speaking, the $2$-ary cloning system on the braid groups is the prototypical example of a cloning system, and $bV$ is the prototypical example of a corresponding Thompson-like group, as the impetus for the work in \cite{witzel18} was to generalize the braided case.

Let us recall the construction, and in fact we will do the $d$-ary analog, of the braided Higman--Thompson groups $bV_d$. These are a natural generalization, considered by Aroca and Cumplido in \cite{aroca21} and by Skipper and Wu in \cite{skipperwu}. These groups can be phrased in terms of $d$-ary cloning systems on the family of braid groups $(B_n)_{n\in\N}$. Numbering the strands of an element $b$ of $B_n$ from left to right at the bottom, $\clone_k^n$ sends $b$ to the element of $B_{n+d-1}$ obtained by replacing the $k$th strand with $d$ parallel strands. The maps $\rho_n\colon B_n\to S_n$ are the standard projections tracking which strand goes where. For $d=2$, it was discussed in \cite[Remark~2.10]{witzel18} that this forms a cloning system, and it is easy to generalize this to all $d$. In fact, in both \cite{aroca21} and \cite{skipperwu} a more general setup is used, which amounts to considering a $d$-ary cloning system on $(B_n\wr H)_{n\in\N}$ for any $H\le B_d$. For now we will just stick to the $bV_d=\Thomp_d(B_*)$ examples, i.e., when $H=\{1\}$ (since when $H\ne\{1\}$ the $d$-ary cloning systems are not always fully compatible). We can also do the exact same procedure to the family of pure braid groups $(PB_n)_{n\in\N}$, which yields groups denoted by $bF_d=\Thomp_d(PB_*)$. See Figure~\ref{fig:braid} for an example of expansion using the ($2$-ary) cloning system yielding $bV$.

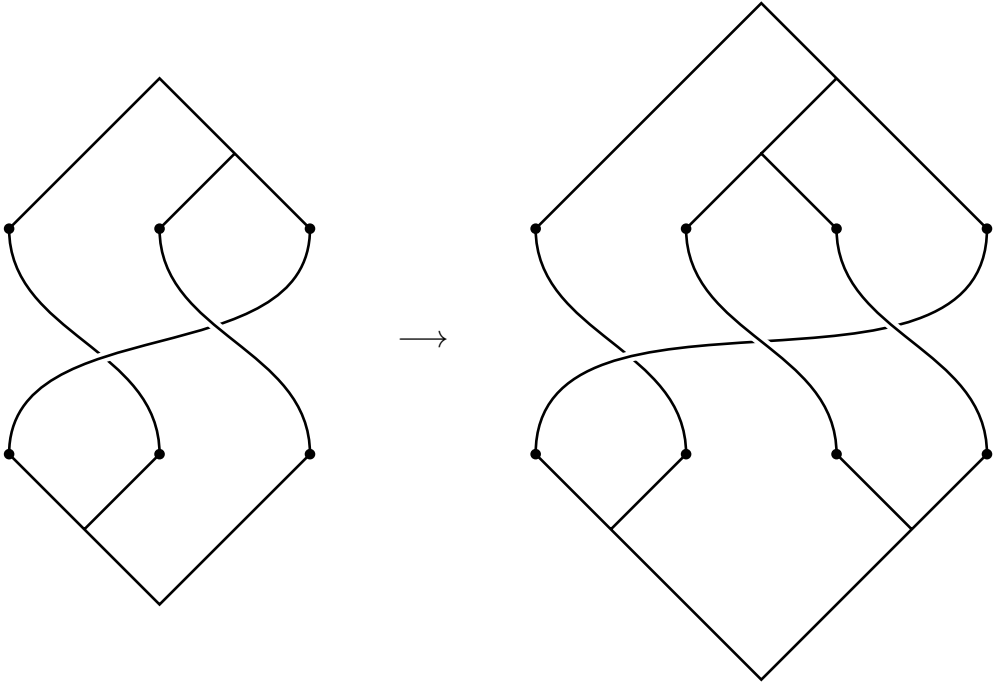
\begin{figure}[htb]
 \begin{tikzpicture}[line width=1pt]\centering
  \draw (0,0) -- (2,2) -- (4,0)   (3,1) -- (2,0);
	
	\draw[white, line width=4pt]
  (0,0) to [out=-90, in=90] (2,-3);
  \draw
  (0,0) to [out=-90, in=90] (2,-3);
	
	\draw[white, line width=4pt]
  (4,0) to [out=-90, in=90] (0,-3);
  \draw
  (4,0) to [out=-90, in=90] (0,-3);
	
  \draw[white, line width=4pt]
  (2,0) to [out=-90, in=90] (4,-3);
  \draw
  (2,0) to [out=-90, in=90] (4,-3);
	
	\filldraw (0,0) circle (1.5pt);
	\filldraw (2,0) circle (1.5pt);
	\filldraw (4,0) circle (1.5pt);
	
	\draw (0,-3) -- (2,-5) -- (4,-3)   (1,-4) -- (2,-3);
	\filldraw (0,-3) circle (1.5pt);
	\filldraw (2,-3) circle (1.5pt);
	\filldraw (4,-3) circle (1.5pt);
	
	\node at (5.5,-1.5) {$\longrightarrow$};
	
	\begin{scope}[xshift=8cm,yshift=1cm]	
   \draw (-1,-1) -- (2,2) -- (5,-1)   (3,1) -- (2,0)   (1,-1) -- (2,0) -- (3,-1);
	 
   \draw[white, line width=4pt]
   (-1,-1) to [out=-90, in=90] (1,-4);
   \draw
   (-1,-1) to [out=-90, in=90] (1,-4);
	
   \draw[white, line width=4pt]
   (5,-1) to [out=-90, in=90] (-1,-4);
   \draw
   (5,-1) to [out=-90, in=90] (-1,-4);
	
   \draw[white, line width=4pt]
   (1,-1) to [out=-90, in=90] (3,-4);
   \draw
   (1,-1) to [out=-90, in=90] (3,-4);
	
   \draw[white, line width=4pt]
   (3,-1) to [out=-90, in=90] (5,-4);
   \draw
   (3,-1) to [out=-90, in=90] (5,-4);
	
	 \filldraw (-1,-1) circle (1.5pt);
	 \filldraw (1,-1) circle (1.5pt);
	 \filldraw (3,-1) circle (1.5pt);
	 \filldraw (5,-1) circle (1.5pt);
	
	 \draw (-1,-4) -- (2,-7) -- (5,-4)   (0,-5) -- (1,-4)   (3,-4) -- (4,-5);
	 \filldraw (-1,-4) circle (1.5pt);
	 \filldraw (1,-4) circle (1.5pt);
	 \filldraw (3,-4) circle (1.5pt);
	 \filldraw (5,-4) circle (1.5pt);
	\end{scope}
 \end{tikzpicture}
 \caption{An example of an expansion using the ($2$-ary) cloning system defined above on $(B_n)_{n\in\N}$. Here we draw $(T_-,b,T_+)$ by drawing $T_+$ upside-down and below $T_-$, with the strands of braid $b$ connecting the leaves. This makes the expansion move look quite natural. Since these pictures differ by an expansion, they represent the same element of $bV=\Thomp_2(B_*)$.}\label{fig:braid}
\end{figure}

\begin{proposition}\label{prop:Vbr_ICC}
The $d$-ary cloning system on the braid groups yielding $bV_d$ is fully compatible and diverse, and so $bV_d$ is ICC. The same is true for the pure braid groups, so $bF_d$ is ICC. Hence $\vN(bV_d)$ and $\vN(bF_d)$ are type $\II_1$ factors.
\end{proposition}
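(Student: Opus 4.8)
The plan is to verify the two hypotheses of Theorem~\ref{thrm:diverse_ICC}---full compatibility and diversity---for the $d$-ary cloning system on the braid groups $(B_n)_{n\in\N}$, and then to transfer both conclusions to the pure braid groups via Remark~\ref{rmk:subsystem}. Once ICC-ness is established, the fact that $\vN(bV_d)$ and $\vN(bF_d)$ are type $\II_1$ factors is immediate from the general principle (recalled in Subsection~\ref{sec:ICC}) that $\vN(G)$ is a type $\II_1$ factor for any countable ICC group $G$; since $B_n$ and $PB_n$ are countable, so are $bV_d$ and $bF_d$.

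For full compatibility, I would check directly that the representation map $\rho_{n+d-1}((b)\clone_k^n)$ agrees with $(\rho_n(b))\symmclone_k^n$ on \emph{all} indices $1\le i\le n+d-1$, not merely outside the cloned block $k,\dots,k+d-1$. The key observation is that when one replaces the $k$th strand of $b$ by $d$ parallel strands, the underlying permutation of the resulting braid is exactly what one gets by applying the symmetric-group cloning map $\symmclone_k^n$ to $\rho_n(b)$: the $d$ new parallel strands stay parallel and in order, so they realize precisely the ``insert $d-1$ extra points that track together'' behavior defining $\symmclone_k^n$, and this includes the indices inside the block. This is essentially the remark from \cite[Remark~2.10]{witzel18} (and its $d$-ary analog), so I would cite that and note the block indices work out because parallel strands induce an order-preserving assignment.

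For diversity, I would argue as in Example~\ref{ex:symm_diverse}, but one level up. Suppose $b\in B_{n+d-1}$ lies in the image of $\clone_k^n$ for every $1\le k\le n$. Being in the image of $\clone_k^n$ means the braid $b$ has $d$ consecutive strands (those occupying positions $k,\dots,k+d-1$ at the bottom) that run parallel---they neither cross each other nor are crossed by any other strand passing between them. If this holds simultaneously for all $k$, then every pair of horizontally adjacent strands runs parallel, which forces $b$ to be the trivial braid. I would make ``parallel'' precise by working with $\rho_n$ together with the fact that the preimage under $\clone_k^n$ recovers the braid on fewer strands, and conclude $\bigcap_{k=1}^n (B_n)\clone_k^n=\{1\}$ for all $n\ge 2$, giving diversity.

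The pure braid case then follows immediately: $(PB_n)_{n\in\N}$ is a family of subgroups $PB_n\le B_n$ on which the cloning system restricts (cloning a pure braid yields a pure braid, and $\rho_n$ restricts trivially on $PB_n$ since pure braids have trivial underlying permutation), so by Remark~\ref{rmk:subsystem} the restricted system is again fully compatible and diverse, whence $bF_d=\Thomp_d(PB_*)$ is ICC by Theorem~\ref{thrm:diverse_ICC}. The step I expect to require the most care is the diversity argument for the full braid groups: unlike the symmetric-group case, where ``in the image of $\symmclone_k^n$'' is a clean combinatorial condition on a permutation, here I must argue about actual braids rather than just their underlying permutations, and rule out the possibility that nontrivial braiding among strands outside the block survives in every image. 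The cleanest route is probably to observe that being in the image of $\clone_k^n$ for all $k$ forces the underlying permutation to lie in $\bigcap_k \operatorname{im}(\symmclone_k^n)=\{1\}$ (so $b$ is pure), and then to leverage the strand-parallelism characterization to kill the remaining pure braiding as well.
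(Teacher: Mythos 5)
Your proposal is correct and follows essentially the same route as the paper: full compatibility is by construction of the cloning maps (citing the analog of \cite[Remark~2.10]{witzel18}), diversity comes from the observation that a braid in the image of $\clone_k^n$ has its $k$th through $(k+d-1)$st strands parallel so a braid in every image has all strands parallel and is therefore trivial, and the pure braid case follows from Remark~\ref{rmk:subsystem}. The extra care you propose for the diversity step (first killing the permutation, then the pure braiding) is more detail than the paper supplies but lands on the same strand-parallelism argument.
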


\begin{proof}
These $d$-cloning systems are fully compatible by construction (the one on the pure braid groups is even pure), so we need to prove they are diverse. By Remark~\ref{rmk:subsystem}, we can just discuss $bV_d$, and $bF_d$ will follow for free. A braid in the image of $\clone_k^n$ has its $k$th through $(k+d-1)$st strands parallel to each other, so a braid in the image of every $\clone_k^n$ has all of its strands parallel. The only such braid is the identity, so indeed the cloning systems are diverse. Now Theorem~\ref{thrm:diverse_ICC} says $bV_d$ and $bF_d$ are ICC.
\end{proof}

As a remark, the fact that $bV=bV_2$ is ICC can already be gleaned from \cite{zaremsky18}. Indeed, it follows from \cite{zaremsky18} that $bV$ has no proper finite index subgroups, and $Z(bV)=\{1\}$.

\subsection{Upper triangular matrix groups}\label{sec:mtx}

Consider the family $(B_n(R))_{n\in\N}$ of groups $B_n(R)$ of invertible $n$-by-$n$ upper triangular matrices over a (countable) ring $R$. (For simplicity, let us assume $R$ is commutative, though this will not be crucial for some of what follows.) In \cite[Section~7]{witzel18} a pure ($2$-ary) cloning system on this family is described, which has an obvious $d$-ary analog. Very roughly, the idea is that the $k$th cloning map applied to a matrix $A$ replaces the $(k,k)$-entry $a_{k,k}$ with a $2$-by-$2$ matrix $\begin{pmatrix}a_{k,k}&0\\0&a_{k,k}\end{pmatrix}$, and then adds a new row and new column to accommodate this. The new column just duplicates the original $k$th column (outside this new $2$-by-$2$ matrix) and the new row is all zeros (outside this new $2$-by-$2$ matrix). The obvious $d$-ary analog is to blow up $a_{k,k}$ to a $d$-by-$d$ diagonal matrix with $a_{k,k}$ all down the diagonal, $d-1$ new columns duplicating the original, and $d-1$ new mostly-zero rows.

For more precision about the definition of the $2$-ary cloning maps, see \cite[Section~7]{witzel18}. For the sake of intuition, let us draw a picture of the cloning map $\clone_3^5$ applied to an example matrix, in the $3$-ary case:
\[
\begin{pmatrix} 1&2&3&4&5\\0&6&7&8&9\\0&0&10&11&12\\0&0&0&13&14\\0&0&0&0&15\end{pmatrix}\clone_3^5 =
\begin{pmatrix} 1&2&3&3&3&4&5\\0&6&7&7&7&8&9\\0&0&10&0&0&0&0\\0&0&0&10&0&0&0\\0&0&0&0&10&11&12\\0&0&0&0&0&13&14\\0&0&0&0&0&0&15\end{pmatrix}
\]
It is proved in \cite[Lemma~7.1]{witzel18} that these cloning maps define a pure cloning system in the $2$-ary case. The $d$-ary case has not been considered in the literature until now, but it is easy to verify that this forms a pure $d$-ary cloning system, for similar reasons as in the $d=2$ case. The following is immediate from the definition:

\begin{observation}\label{obs:cloned_mtx}
Let $A\in B_{n+d-1}(R)$ such that $A$ lies in the image of $\clone_k^n$. Then the $(i,i)$-entries $A_{i,i}$ coincide for all $k\le i\le k+d-1$, and for any $k\le i<k+d-1$ every non-diagonal entry of the $i$th row is $0$. \qed
\end{observation}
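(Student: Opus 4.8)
The plan is to simply unwind the explicit definition of the $d$-ary cloning map $\clone_k^n$ on $B_n(R)$, reading off the two asserted patterns directly from the block structure it produces. Since the statement concerns a single matrix lying in the image of a single cloning map, no induction or axiom-verification is needed; the entire content is bookkeeping on matrix entries, which is why the result is flagged as immediate.

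First I would fix $A\in B_n(R)$ and describe the shape of $(A)\clone_k^n$ explicitly. By definition the $(k,k)$-entry $a_{k,k}$ is blown up into a $d\times d$ scalar block $a_{k,k}I_d$ occupying the diagonal positions $(k,k),(k+1,k+1),\dots,(k+d-1,k+d-1)$, while the remaining data of the original matrix is arranged around this block: the $d$ columns $k,\dots,k+d-1$ each carry (above the block) a copy of the original entries of column $k$, the $d-1$ inserted rows are otherwise zero, and the off-diagonal right-hand entries of the original row $k$ are relocated to the bottom row $k+d-1$ of the block.

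The first claim is then immediate: the entries $A_{i,i}$ for $k\le i\le k+d-1$ are exactly the diagonal entries of the scalar block $a_{k,k}I_d$, hence all equal to $a_{k,k}$. For the second claim I would examine the rows $i$ with $k\le i<k+d-1$, that is $i=k,k+1,\dots,k+d-2$. Row $k$ retains only its diagonal entry $a_{k,k}$: its left-of-diagonal entries vanish by upper-triangularity, its within-block entries to the right are zero because the block $a_{k,k}I_d$ is diagonal, and its original off-diagonal right-hand entries have been moved down to row $k+d-1$. The intermediate rows $k+1,\dots,k+d-2$ are the freshly inserted rows, which by construction are zero off the diagonal. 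Thus every row $i$ with $k\le i<k+d-1$ has all non-diagonal entries equal to $0$.

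There is essentially no obstacle here, as the content is purely definitional; the one point requiring care is the exclusion of $i=k+d-1$ in the second claim. That bottom row of the block is precisely where the relocated off-diagonal entries of the original row $k$ land (as in the displayed $\clone_3^5$ example, where $11$ and $12$ reappear in the last block row), so it is genuinely \emph{not} zero off the diagonal, and the strict inequality $i<k+d-1$ is necessary rather than incidental.
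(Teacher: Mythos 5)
Your proof is correct and matches the paper's intent exactly: the paper gives no written argument, stating only that the observation is ``immediate from the definition,'' and your unwinding of the block structure of $\clone_k^n$ (scalar diagonal block, duplicated columns above, zero inserted rows, relocated right-hand entries in row $k+d-1$) is precisely the bookkeeping being left implicit. Your remark on why $i=k+d-1$ must be excluded is also consistent with the paper's displayed $\clone_3^5$ example.
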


If the group of units $R^\times$ is non-trivial, say $1\ne r\in R^\times$ then it is easy to see (similarly to the example in Proposition~\ref{prop:direct_product}) that the element $[1,rI_n,1]$ of $\Thomp_d(B_*(R))$ is non-trivial and central, so $\Thomp_d(B_*(R))$ is not ICC. However, it easy to modify this example to get a family of Thompson-like groups that are ICC. Let $Z_n(R)\le B_n(R)$ be the group of homotheties, i.e., scalar multiples of the identity, so $Z_n(R)\cong (R^\times)^n$. Let $\overline{B}_n(R)\defeq B_n(R)/Z_n(R)$.

\begin{lemma}
The above pure $d$-ary cloning system on $(B_n(R))_{n\in\N}$ naturally induces a pure $d$-ary cloning system on $(\overline{B}_n(R))_{n\in\N}$ via $(AZ_n(R))\clone_k^n \defeq (A)\clone_k^n Z_{n+d-1}(R)$.
\end{lemma}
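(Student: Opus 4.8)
The plan is to check that the proposed maps are well defined and injective, and that the cloning axioms (C1)--(C3) descend from $(B_n(R))_{n\in\N}$ to the quotients $(\overline{B}_n(R))_{n\in\N}$. The whole argument rests on one structural observation: for each $1\le k\le n$ the cloning map $\clone_k^n$ carries homotheties to homotheties, and in fact $(rI_n)\clone_k^n = rI_{n+d-1}$ for every $r\in R^\times$. This is immediate from the explicit description of the cloning maps (or from Observation~\ref{obs:cloned_mtx}): blowing up the scalar entry $a_{k,k}=r$ produces the diagonal block $rI_d$, the duplicated columns of $rI_n$ above the block are zero, and the new rows are zero off the diagonal, so the output is exactly $rI_{n+d-1}$. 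Since $r\in R^\times$ is arbitrary, $\clone_k^n$ restricts to a bijection $Z_n(R)\to Z_{n+d-1}(R)$. I also record that, because the system is pure, every $\clone_k^n$ is in fact an injective group homomorphism: axiom (C1) with $\rho_n$ trivial reads $(gh)\clone_k^n = (g)\clone_k^n\,(h)\clone_k^n$.

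With these two facts in hand, well-definedness and injectivity are formal. For well-definedness, if $A' = A\,rI_n$ with $rI_n\in Z_n(R)$, then the homomorphism property gives $(A')\clone_k^n = (A)\clone_k^n\,(rI_n)\clone_k^n = (A)\clone_k^n\,rI_{n+d-1}$, so $(A')\clone_k^n$ and $(A)\clone_k^n$ determine the same coset modulo $Z_{n+d-1}(R)$; hence $(AZ_n(R))\clone_k^n$ does not depend on the chosen representative. For injectivity, suppose $(A)\clone_k^n$ and $(B)\clone_k^n$ lie in the same $Z_{n+d-1}(R)$-coset. Then $(AB^{-1})\clone_k^n = (A)\clone_k^n\,((B)\clone_k^n)^{-1}$ is a homothety $rI_{n+d-1} = (rI_n)\clone_k^n$, so injectivity of $\clone_k^n$ on $B_n(R)$ forces $AB^{-1} = rI_n\in Z_n(R)$, i.e.\ $A$ and $B$ represent the same element of $\overline{B}_n(R)$.

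Finally, the axioms descend. Let $q_n\colon B_n(R)\to\overline{B}_n(R)$ be the projection; by construction the induced maps satisfy $\overline{\clone}_k^n\circ q_n = q_{n+d-1}\circ\clone_k^n$. Taking every $\rho_n$ trivial on $\overline{B}_n(R)$ keeps the quotient system pure, so (C3) holds vacuously and (C1) is again just the homomorphism property, inherited since each $q_n$ is a homomorphism. For (C2), both sides are composites of cloning maps, and since they agree on $(B_n(R))_{n\in\N}$ and intertwine with the surjections $q_n$, evaluating on an arbitrary coset $q_n(A)$ reduces the identity on $\overline{B}_n(R)$ to the already-known identity on $B_n(R)$; surjectivity of $q_n$ then yields (C2) on the quotient. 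The only step carrying genuine content is the structural observation that $\clone_k^n$ sends scalars to scalars and surjects onto the homotheties, and I expect the routine but slightly fiddly part to be pinning down the exact form of $(rI_n)\clone_k^n$ from the matrix recipe; everything downstream is bookkeeping.
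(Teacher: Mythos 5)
Your proposal is correct and follows essentially the same route as the paper: observe that $\clone_k^n$ carries $Z_n(R)$ into $Z_{n+d-1}(R)$ (so the induced maps are well defined), and note that in the pure setting only the ``product of clonings'' axiom needs checking, which descends formally along the quotient maps. The paper's proof is just a two-sentence version of this; your added verification that $(rI_n)\clone_k^n = rI_{n+d-1}$ and that the induced maps remain injective fills in details the paper leaves implicit.
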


\begin{proof}
Since $\clone_k^n$ takes $Z_n(R)$ into $Z_{n+d-1}(R)$, the induced $d$-ary cloning maps are well defined. Since we are in the pure situation, to be a $d$-ary cloning system the only thing to check is the ``product of clonings'' axiom, which clearly holds.
\end{proof}

This $d$-ary cloning system on the $\overline{B}_n(R)$ is new for $d>2$, and for $d=2$ this example was essentially considered in \cite[Section~8]{witzel18}.

\begin{proposition}
The above $d$-ary cloning system on $(\overline{B}_n(R))_{n\in\N}$ is diverse, and so $\Thomp_d(\overline{B}_*(R))$ is ICC. Hence $\vN(\Thomp_d(\overline{B}_*(R)))$ is a type $\II_1$ factor.
\end{proposition}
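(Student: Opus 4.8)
The plan is to verify that the $d$-ary cloning system on $(\overline{B}_n(R))_{n\in\N}$ is diverse and then invoke Theorem~\ref{thrm:diverse_ICC}, since the system is pure (hence fully compatible) by the preceding lemma. Thus the only real work is establishing diversity: I must show that for all sufficiently large $n$ we have $\bigcap_{k=1}^n (\overline{B}_n(R))\clone_k^n = \{1\}$, where $1$ here denotes the identity coset $Z_{n+d-1}(R)$.

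First I would lift the problem back up to $B_{n+d-1}(R)$. Suppose a coset $AZ_{n+d-1}(R)$ lies in the image of every $\clone_k^n$ on $(\overline{B}_n(R))_{n\in\N}$. By definition of the induced cloning maps, for each $k$ there is some $A_k \in B_n(R)$ with $(A_k)\clone_k^n \in A Z_{n+d-1}(R)$, i.e., $A = (A_k)\clone_k^n \cdot D_k$ for some homothety $D_k \in Z_{n+d-1}(R)$. So $A$ agrees, up to a scalar multiple of the identity, with a matrix in the image of $\clone_k^n$, for every $k$. I would then apply Observation~\ref{obs:cloned_mtx} to each $(A_k)\clone_k^n$: multiplying by the scalar $D_k$ preserves the structural conclusions of that observation, since a homothety only rescales every entry by the same unit. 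Hence for every $1\le k \le n$ and every $k\le i < k+d-1$, the $i$th row of $A$ has all non-diagonal entries equal to $0$, and the diagonal entries $A_{i,i}$ coincide for $k\le i\le k+d-1$.

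Next I would combine these constraints as $k$ ranges over $1,\dots,n$. Letting $k$ vary shows that \emph{every} row index $i$ with $1\le i\le n+d-2$ gets caught in some range $[k,k+d-1)$ for a valid $k$ (this is where I need $n$ large, or really just $n\ge 1$, so that the ranges cover all rows but the last), forcing every off-diagonal entry of $A$ to vanish; thus $A$ is diagonal. Moreover the overlapping ``coinciding diagonal'' conditions chain together to force all diagonal entries $A_{1,1},\dots,A_{n+d-1,n+d-1}$ to be equal to a common unit $r\in R^\times$. Therefore $A = rI_{n+d-1}$ is itself a homothety, so $AZ_{n+d-1}(R) = Z_{n+d-1}(R)$ is the identity of $\overline{B}_{n+d-1}(R)$. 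This establishes diversity, and Theorem~\ref{thrm:diverse_ICC} then yields that $\Thomp_d(\overline{B}_*(R))$ is ICC; the statement about $\vN(\Thomp_d(\overline{B}_*(R)))$ being a type $\II_1$ factor follows from the equivalence between ICC and type $\II_1$ recorded in Subsection~\ref{sec:ICC}.

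I expect the main obstacle to be bookkeeping the diagonal-coincidence conditions carefully: one must check that the per-$k$ conclusions of Observation~\ref{obs:cloned_mtx} genuinely link up across overlapping windows to force a \emph{single} global scalar, rather than merely local blocks of equal diagonal entries, and that the scalars $D_k$ introduced by passing to cosets do not obstruct this (they do not, precisely because scaling by a homothety is diagonal-structure-preserving). Everything else is a direct translation of the braid-group argument in Proposition~\ref{prop:Vbr_ICC} into the language of upper-triangular matrices.
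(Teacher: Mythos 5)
Your proposal is correct and follows essentially the same route as the paper: lift a coset in the common image back to $B_{n+d-1}(R)$, apply Observation~\ref{obs:cloned_mtx}, note that homotheties preserve its structural conclusions, let $k$ vary to force $A$ to be a scalar matrix, and conclude via Theorem~\ref{thrm:diverse_ICC}. The extra bookkeeping you flag (overlapping diagonal windows chaining to a single scalar, the last row being harmless by upper-triangularity) is exactly the content the paper compresses into one sentence, so there is no gap.
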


\begin{proof}
Suppose $AZ_{n+d-1}(R) \in \overline{B}_{n+d-1}(R)$ is in the image of every $\clone_k^n$. This means that for every $1\le k\le n$, $A\in B_{n+d-1}(R)$ is a product of something in the image of $\clone_k^n$ times something in $Z_{n+d-1}(R)$. If a matrix lies in the image of $\clone_k^n$ then by Observation~\ref{obs:cloned_mtx} its $(i,i)$-entries coincide for all $k\le i\le k+d-1$, and for any $k\le i<k+d-1$, every non-diagonal entry of the $i$th row is $0$. These properties are preserved under multiplication by an element of $Z_{n+d-1}(R)$, so in particular they hold for $A$. Since these properties hold for all $1\le k\le n$, in fact all the diagonal entries of $A$ coincide and all the non-diagonal entries of $A$ are $0$. Hence $A\in Z_{n+d-1}(R)$, so the intersection of the images of all the $\clone_k^n$ is trivial in $\overline{B}_{n+d-1}(R)$. This shows the $d$-ary cloning system is diverse, and of course it is fully compatible, so $\Thomp_d(\overline{B}_*(R))$ is ICC by Theorem~\ref{thrm:diverse_ICC}.
\end{proof}

Another family of upper triangular matrix groups that was outfitted with a cloning system in \cite[Section~7]{witzel18} is the family of Abels groups, essentially due to Abels and studied in depth by Abels and Brown in \cite{abels87}. The $n$th \emph{Abels group} $\Ab_n$ (for some prime $p$) is the subgroup of $B_{n+1}(\Z[1/p])$ consisting of those upper triangular matrices whose $(1,1)$-entry and $(n+1,n+1)$-entry are both $1$. The above $d$-ary cloning system on $(B_n(\Z[1/p]))_{n\in\N}$ clearly restricts to a $d$-ary cloning system on $(\Ab_n)_{n\in\N}$, and so we get a Thompson-like group $\Thomp_d(\Ab_*)$.

The original motivation for constructing $\Thomp(\Ab_*)$ comes from finiteness properties of groups: $\Ab_n$ is of type $\F_{n-1}$ but not $\F_n$, but it turns out $\Thomp(\Ab_*)$ is of type $\F_\infty$. For our purposes here, it provides another example of a diverse cloning system and hence an ICC Thompson-like group:

\begin{proposition}
The above $d$-ary cloning system on $(\Ab_n)_{n\in\N}$ is diverse, and so $\Thomp_d(\Ab_*)$ is ICC. Hence $\vN(\Thomp_d(\Ab_*))$ is a type $\II_1$ factor.
\end{proposition}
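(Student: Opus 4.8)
The proposition makes three assertions, but only the first carries the content: once I have shown that the $d$-ary cloning system on $(\Ab_n)_{n\in\N}$ is diverse, the fact that it is pure (being a restriction of the pure system on $(B_n(\Z[1/p]))_{n\in\N}$), and hence fully compatible, lets Theorem~\ref{thrm:diverse_ICC} conclude that $\Thomp_d(\Ab_*)$ is ICC; the dictionary recalled in Subsection~\ref{sec:ICC}, that $\vN(G)$ is a type $\II_1$ factor exactly when the countable group $G$ is ICC, then immediately gives that $\vN(\Thomp_d(\Ab_*))$ is a type $\II_1$ factor. So the plan is to run the same diversity argument used for $\overline{B}_*(R)$ in the preceding proposition, with the two fixed corner entries of the Abels matrices playing the role formerly played by the quotient by homotheties.

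To prove diversity I would fix $n$ large and take an arbitrary $A$ in the intersection $\bigcap_{k=1}^n (\Ab_n)\clone_k^n$, viewing $A$ as an upper triangular matrix lying in the cloned Abels group $\Ab_{n+d-1}$. For each $1\le k\le n$ the hypothesis that $A$ is in the image of $\clone_k^n$ lets me apply Observation~\ref{obs:cloned_mtx}: the diagonal entries $A_{i,i}$ agree for $k\le i\le k+d-1$, and the $i$th row has only its diagonal entry nonzero whenever $k\le i<k+d-1$. The decisive point is that, as $k$ ranges over $1,\dots,n$, the diagonal ``windows'' $[k,k+d-1]$ overlap and sweep every diagonal position, forcing all diagonal entries of $A$ to be equal, while the row ``windows'' $[k,k+d-2]$ cover every row that could carry an off-diagonal entry, forcing $A$ to be diagonal. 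Hence $A$ is a scalar matrix $cI$. At this stage, instead of killing $c$ by passing to a quotient, I would invoke the defining property of an Abels group directly: its $(1,1)$-entry equals $1$, so $c=1$ and $A=I$. This yields $\bigcap_{k=1}^n (\Ab_n)\clone_k^n=\{1\}$ for all large $n$, i.e. diversity.

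The step demanding the most care, and the main obstacle, is verifying that the windows supplied by Observation~\ref{obs:cloned_mtx} really do sweep the entire matrix. Unlike for $B_n(R)$ and $\overline{B}_n(R)$, here one must track the Abels indexing convention, which places $\Ab_n$ inside $B_{n+1}$ rather than $B_n$; one has to confirm both that blowing up the fixed corner positions still produces legitimate cloning maps preserving the corner-$1$ condition, and that over the full range $1\le k\le n$ the associated blocks leave no diagonal position unequalized and no off-diagonal-bearing row unzeroed. Once this bookkeeping is settled the conclusion $A=cI$ follows, and the corner normalization $c=1$ closes the diversity argument; with diversity in hand, full compatibility together with Theorem~\ref{thrm:diverse_ICC} and the ICC/type $\II_1$ correspondence complete the proof.
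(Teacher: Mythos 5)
Your proof is correct and follows essentially the same route as the paper's: apply Observation~\ref{obs:cloned_mtx} for every $k$, let the overlapping diagonal and row windows force the matrix to be scalar (the paper uses the corner condition that the $(1,1)$-entry equals $1$ to pin the whole diagonal to $1$ directly, rather than first writing $A=cI$ and then setting $c=1$ --- an immaterial reordering), and then feed diversity into Theorem~\ref{thrm:diverse_ICC}. The indexing bookkeeping you flag as the delicate point is exactly what the paper's own proof also takes for granted, so your write-up is not missing anything that the paper supplies.
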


\begin{proof}
If a matrix in $\Ab_{n+d-1}$ lies in the image of $\clone_k^n$ then by Observation~\ref{obs:cloned_mtx} its $(i,i)$-entries coincide for all $k\le i\le k+d-1$, and for any $k\le i<k+d-1$, every non-diagonal entry of the $i$th row is $0$. Since the $(1,1)$-entry of any matrix in $\Ab_{n+d-1}$ is $1$, this first observation shows that if a matrix lies in the image of every $\clone_k^n$, then each of its diagonal entries is $1$. The second observation shows that if a matrix lies in the image of every $\clone_k^n$ then every off-diagonal entry is $0$. We conclude that the identity is the only such matrix in $\Ab_{n+d-1}$. This proves diversity, and full compatibility is obvious, so Theorem~\ref{thrm:diverse_ICC} says $\Thomp_d(\Ab_*)$ is ICC.
\end{proof}

Note that $\Ab_n$ has non-trivial center (namely the subgroup of matrices differing from the identity only in the top right entry), hence is not ICC, so we could not have simply used Theorem~\ref{thrm:inherit_ICC}. As another remark, instead of requiring both the top and the bottom diagonal entry to be $1$, one could just require the top diagonal entry to be $1$ (or just the bottom). This family of groups would also yield an ICC Thompson-like group, by the same argument.

\subsection{R\"over--Nekrashevych groups}\label{sec:nekr}

The main motivation for introducing $d$-ary cloning systems in \cite{skipper21} was to extend the definition of ($2$-ary) cloning systems from \cite{witzel18} to include R\"over--Nekrashevych groups. Let us briefly recall this family of groups. Let $\tree_d$ be the rooted regular $d$-ary tree. Note that $\Aut(\tree_d)\cong S_d\wr \Aut(\tree_d)$, since an automorphism of $\tree_d$ is determined by how it permutes the $d$ vertices adjacent to the root together with what it does to each copy of $\tree_d$ whose root is one of these vertices. A subgroup $G\le \Aut(\tree_d)$ is called \emph{self-similar} if under the above isomorphism $\Aut(\tree_d)\to S_d\wr \Aut(\tree_d)$, the image of $G$ lies in $S_d\wr G$. Now for any self-similar $G$, one can put a $d$-ary cloning system on $(S_n \wr G)_{n\in\N}$, and the resulting Thompson-like group $\Thomp_d(S_*\wr G)$ is called a \emph{R\"over--Nekrashevych group}. See \cite{skipper21} for more details.

The $d$-ary cloning system above is not fully compatible. For example, $\clone_1^1\colon G\to S_d\wr G$ is exactly the restriction of the isomorphism $\Aut(\tree_d)\to S_d\wr \Aut(\tree_d)$ to $G\to S_d\wr G$, and while $\rho_1$ is trivial, $\rho_d$ is not, namely $\rho_d$ is the natural projection $S_d\wr G\to S_d$. Since the $d$-ary cloning system is not fully compatible, our sufficient conditions here for ICC do not apply. However, these groups are specific enough that it is not too difficult to tell that they are ICC:

\begin{proposition}
For any self-similar $G$, the R\"over--Nekrashevych group $\Thomp_d(S_*\wr G)$ is ICC. Hence $\vN(\Thomp_d(S_*\wr G))$ is a type $\II_1$ factor.
\end{proposition}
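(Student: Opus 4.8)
The goal is to show that for any self-similar $G \le \Aut(\tree_d)$, the R\"over--Nekrashevych group $\Thomp_d(S_*\wr G)$ is ICC. Since full compatibility fails here (as the excerpt notes, $\rho_d$ is nontrivial on $S_d \wr G$), we cannot invoke Theorem~\ref{thrm:diverse_ICC}. Instead I would work directly with the geometric/dynamical description of these groups. A R\"over--Nekrashevych element can be represented by a triple $(T_-, \sigma, T_+)$ where $T_\pm$ are $d$-ary trees with the same number $n$ of leaves and $\sigma \in S_n \wr G$; equivalently, it is a homeomorphism of the boundary Cantor set $\partial\tree_d$ that is ``locally an element of $G$'' on the cylinders determined by the leaves. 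The natural strategy is the same one used in Lemma~\ref{lem:W_ICC} for $W_d \le V_d$: show that if the centralizer of some $g$ has finite index, then $g$ commutes with a rich enough supply of elements supported on small cylinders to force $g$ to be trivial.

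\textbf{Key steps.} First I would recall that $\Thomp_d(S_*\wr G)$ contains a natural copy of $V_d$ (take the image of the $S_n \le S_n \wr G$ part), and in particular contains $[F_d,F_d]$, which is simple and hence has no proper finite-index subgroups. So if the centralizer of $g$ has finite index, then $g$ centralizes all of $[F_d,F_d]$. Second, I would transfer the argument of Lemma~\ref{lem:W_ICC} to the boundary: realizing $\Thomp_d(S_*\wr G)$ as homeomorphisms of the Cantor set $\partial\tree_d$ (or of $[0,1)$ after collapsing), commuting with an element of $[F_d,F_d]$ whose support is an arbitrarily small cylinder $C$ forces $g$ to preserve $C$ and in fact to fix the endpoints of such cylinders. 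Running over a cofinal family of cylinders shows $g$ acts trivially on a dense set of points, hence $g$ fixes $\partial\tree_d$ pointwise. Third, I would argue that the only element of $\Thomp_d(S_*\wr G)$ acting trivially on $\partial\tree_d$ is the identity: although elements carry ``local $G$-data'' at the leaves, an element fixing every cylinder setwise and fixing enough boundary points must have trivial permutation part and trivial $G$-labels (since a nontrivial $G$-label acts nontrivially on the subtree, hence nontrivially on the boundary of that cylinder). This last point is where the self-similarity and the faithful action of $G \le \Aut(\tree_d)$ on $\partial\tree_d$ are essential.

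\textbf{Main obstacle.} The delicate step is the third one: carefully ruling out a nontrivial element that acts ``invisibly'' on large cylinders because of its $G$-part. Unlike the clean $V_d$ situation, here $g$ may carry a nontrivial element of $G$ in one of its leaf-labels, and one must verify that such a label genuinely moves points of $\partial\tree_d$ and cannot be cancelled against the tree/permutation data. The cleanest fix is to pass to the faithful action on the Cantor set throughout and observe that $G$ acts faithfully on the boundary of each of its cylinders, so that an element fixing the boundary pointwise must have every $G$-label trivial; combined with triviality of the $S_n$-part this forces $g=1$. I would also double-check the subtlety that shrinking cylinders are genuinely available as supports of $[F_d,F_d]$-elements (the analog of the choice of $b$ and $f$ in Lemma~\ref{lem:W_ICC}), which holds because $[F_d,F_d]$ acts highly transitively enough on dyadic-type cylinders to realize any small cylinder as a support.
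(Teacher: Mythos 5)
Your argument is correct in outline, but it takes a route that differs from the paper's in its key input. The paper's proof invokes Nekrashevych's theorem that every non-trivial normal subgroup of $\Thomp_d(S_*\wr G)$ contains its commutator subgroup (applied to the normal core of the finite-index centralizer), and then uses elements of $[V_d,V_d]$ whose fixed-point set is exactly a prescribed basic clopen set; since commuting elements stabilize each other's fixed-point sets, the element stabilizes every basic open set and is forced to be the identity by Hausdorffness. You instead bypass the normal-subgroup theorem entirely, using only that the canonical copy of $[F_d,F_d]$ is infinite simple and hence has no proper finite-index subgroups --- exactly the mechanism of Lemma~\ref{lem:W_ICC} --- which is a genuine economy of inputs. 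What the paper's choice of $[V_d,V_d]$ buys is cleaner dynamics: one gets clopen fixed-point sets on the nose, so ``$g$ stabilizes every basic open set'' is immediate. Your choice of $[F_d,F_d]$ makes the support step slightly more delicate, and two of your phrasings need repair: (1) the collapse to $[0,1)$ with right-continuous bijections, which Lemma~\ref{lem:W_ICC} exploits, is not available here, since the $G$-labels need not respect the lexicographic identification, so you must stay on $\partial\tree_d$ throughout (as you mostly do); and (2) no element of $[F_d,F_d]$ has support \emph{exactly} a cylinder $C$ --- its support on $\partial\tree_d$ is $C$ minus the two extreme points $v\overline{0}$ and $v\overline{(d-1)}$, and no cylinder containing $\overline{0}$ or $\overline{(d-1)}$ occurs at all. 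The fix is routine: a homeomorphism preserving the support preserves its closure $C$, such cylinders form neighborhood bases at every point other than $\overline{0}$ and $\overline{(d-1)}$, and continuity plus density then forces $g$ to act trivially everywhere; one should also note (as the paper already does implicitly in Lemma~\ref{lem:W_ICC}) that enough intervals are realizable as supports of elements of $[F_d,F_d]$ when $d>2$, where $[F_d,F_d]$ is smaller than the subgroup of elements trivial near the endpoints. Your final step --- faithfulness of the action, i.e., that trivial action forces trivial $S_n$-part and trivial $G$-labels --- is exactly the faithfulness the paper assumes when it views $\Thomp_d(S_*\wr G)$ as a group of self-homeomorphisms of $\partial\tree_d$, and your justification via faithfulness of $G\le\Aut(\tree_d)$ on the boundary of each cylinder is the right one.
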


\begin{proof}
By \cite[Theorem~9.11]{nekrashevych04}, every non-trivial normal subgroup of $\Thomp_d(S_*\wr G)$ contains the commutator subgroup. In particular any element of $\Thomp_d(S_*\wr G)$ with finite index centralizer must commute with every element of the commutator subgroup. Viewing $\Thomp_d(S_*\wr G)$ as a group of self-homeomorphisms of the $d$-ary Cantor set $\partial\tree_d$, for any basic open subset $U$ there exists an element of the commutator subgroup whose fixed point set is precisely $U$ (indeed, the commutator subgroup of the Higman--Thompson group $V_d$ already has this property, and it is a subgroup). Since commuting elements stabilize each other's fixed point sets, this implies that any element with finite index centralizer must stabilize every basic open subset of $\partial\tree_d$. Since $\partial\tree_d$ is Hausdorff, the only such element is the identity, so $\Thomp_d(S_*\wr G)$ is ICC.
\end{proof}

\section{McDuff factors and inner amenability}\label{sec:mcduff}

If a group $G$ is ICC then its group von Neumann algebra $\vN(G)$ is a type $\II_1$ factor. From here we can ask whether additional properties hold, for example (in increasing strength) whether $G$ is inner amenable, whether $\vN(G)$ has property $\Gamma$, and whether $\vN(G)$ is McDuff.

In \cite{jolissaint97} it is shown that $F$ is inner amenable, and in \cite{jolissaint98} that $\vN(F)$ is McDuff. In \cite{picioroaga06} the same is shown for all the $F_d$. In \cite{haagerup17} it is shown that $T$ and $V$ are not inner amenable (and it seems likely that $T_d$ and $V_d$ are similarly not inner amenable). In this section we focus on pure, diverse $d$-ary cloning systems, and show that subject to one more condition called being ``uniform'', they are always inner amenable, and even stronger, they yield McDuff factors.

First let us show that, even without this upcoming uniformity condition, if the $G_n$ themselves yield McDuff factors $\vN(G_n)$, then in the pure case the same holds for $\Thomp_d(G_*)$. (Many of the $G_n$ we care about are not even ICC, much less do they yield McDuff factors, but this result is worth recording nonetheless.)

\begin{lemma}\label{lem:inherit_mcduff}
Let $((G_n)_{n\in\N},(\rho_n)_{n\in\N},(\clone_k^n)_{k\le n})$ be a pure $d$-ary cloning system. Assume that all the $G_n$ are ICC (so $\Thomp_d(G_*)$ is ICC) and that the resulting type $\II_1$ factors $\vN(G_n)$ are all McDuff. Then the type $\II_1$ factor $\vN(\Thomp_d(G_*))$ is also McDuff.
\end{lemma}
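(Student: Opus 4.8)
The plan is to exhibit a pair of non-commuting, non-trivial central sequences in $\vN(\Thomp_d(G_*))$, which by the McDuff criterion from \cite{mcduff70} cited above suffices. Since the cloning system is pure, we have the semidirect product decomposition $\Thomp_d(G_*)=\Thkern_d(G_*)\rtimes F_d$, and $\Thkern_d(G_*)$ is the directed union of the subgroups $G_T$ (because purity makes $\ker(\rho_{n(T)})=G_{n(T)}$, so $K_T=G_T$). My plan is to import central sequences from the $\vN(G_n)$. Concretely, fix a tree $T_0$ with, say, $n_0$ leaves, giving an embedded copy $G_{T_0}\cong G_{n_0}$. Since $\vN(G_{n_0})$ is McDuff it contains two non-commuting non-trivial central sequences $(a_n)$ and $(b_n)$ relative to $\vN(G_{n_0})$; I would view these as elements of $\vN(\Thomp_d(G_*))$ via the inclusion $\vN(G_{T_0})\hookrightarrow \vN(\Thomp_d(G_*))$ and argue they remain central in the larger algebra.

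The key steps, in order, would be as follows. First I would record that the inclusion $\Thomp_d(G_*)\supseteq G_{T_0}$ of groups induces a trace-preserving inclusion of von Neumann algebras $\vN(G_{T_0})\subseteq\vN(\Thomp_d(G_*))$, so the $\|\cdot\|_2$-norm on the subalgebra is the restriction of the ambient one; this means any sequence central relative to $\vN(G_{T_0})$ has vanishing commutators against all elements of $\C[G_{T_0}]$. Second, and this is the crux, I would show that these sequences are in fact central against the whole group $\Thomp_d(G_*)$. The natural mechanism is an \emph{asymptotic} one: for a fixed $g\in\Thomp_d(G_*)$ and a sequence coming from $G_{T_0}$, I expect $\|g a_n - a_n g\|_2\to 0$ because conjugation by $g$ moves the relevant group elements into ever-``larger'' and essentially disjoint supports, so the relevant inner products in the trace computation vanish for large $n$. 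Here the pure, directed-union structure is what one exploits: conjugating a basis element $h\in G_{T_0}$ by a general $[U_-,1,U_+]\in F_d$ or by an element of another $G_{T_1}$ pushes it off of $G_{T_0}$. Third, having verified centrality, I would note non-triviality and non-commutativity are inherited: $\lim\|a_n-\lambda_n\|_2$ cannot vanish for scalars $\lambda_n$ because the $(a_n)$ are non-trivial in $\vN(G_{T_0})$ and $\|\cdot\|_2$ restricts isometrically, and similarly $\liminf\|a_n b_n-b_n a_n\|_2>0$. Finally, inner amenability follows formally since McDuff implies property $\Gamma$, which for the ICC group $\Thomp_d(G_*)$ implies inner amenability by \cite{effros75}.

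The main obstacle I anticipate is the second step: proving that a sequence central relative to the subalgebra $\vN(G_{T_0})$ is genuinely central in the ambient $\vN(\Thomp_d(G_*))$. This is not automatic and is really where the structure of the cloning system must enter. The cleanest route is probably to prove a \emph{relative McDuff} statement rather than a bare McDuff statement: namely that the pair $\vN(\Thkern_d(G_*))\subseteq\vN(\Thomp_d(G_*))$, or even $\vN(G_{T_0})\subseteq\vN(\Thomp_d(G_*))$, enjoys the relative McDuff property, and to build the central sequences so that they are simultaneously central relative to both the subalgebra and the ambient algebra. To control the ambient commutators I would reduce, by density and linearity, to checking $\|u a_n - a_n u\|_2\to 0$ only for $u$ ranging over a generating set of group elements $[T_-,1,T_+]\in F_d$ together with generators of the $G_T$; for the $F_d$-generators the computation is governed by how the tree pair $[T_-,T_+]$ acts on the copy of $G_{n_0}$ sitting under $T_0$, and for large $n$ the central-sequence elements $a_n$ should be supported deep enough inside $G_{n_0}$ (in the directed-union sense) that the commutator is $\|\cdot\|_2$-small. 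I would expect this to require either a uniformity-type hypothesis or a careful diagonal choice of the $a_n$, and flagging exactly which property of the pure cloning system makes the off-diagonal inner products vanish is where the real work lies.
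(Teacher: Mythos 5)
Your proposal and the paper's proof take genuinely different routes, and yours has a gap at exactly the step you flag as ``the crux.'' You propose to fix a single tree $T_0$, import a pair of non-commuting central sequences from the McDuff factor $\vN(G_{n_0})\cong\vN(G_{T_0})$, and then argue they remain central in $\vN(\Thomp_d(G_*))$. But this last step is not merely ``the real work'' --- in the stated generality it fails. A sequence $(a_n)$ that is central relative to $\vN(G_{T_0})$ lives (approximately) in $\C[G_{T_0}]$ for a \emph{fixed} $T_0$; conjugation by an element $f=[U_-,1,U_+]$ of $F_d$ carries $\C[G_{T_0}]$ to $\C[G_{T_1}]$ for a different tree $T_1$, via compositions of the cloning maps (which, in the pure case, are injective homomorphisms). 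There is no asymptotic disjointness to exploit: generically $\|fa_nf^{-1}-a_n\|_2$ stays bounded away from $0$, because nothing forces $(a_n)\clone^{n_0}_k$ to be close to $a_n$. The same problem occurs for commutators with $G_{T'}$ for expansions $T'$ of $T_0$. Indeed, the paper's one genuinely central-sequence argument (Theorem~\ref{thrm:thomp_mcduff}) has to impose the extra ``uniform'' hypothesis (Definition~\ref{def:uniform}) and to build the sequence out of $[F_d,F_d]$-elements with shrinking support, precisely to get commutation with a prescribed finite set via Lemma~\ref{lem:uniform_lemma}. Lemma~\ref{lem:inherit_mcduff} assumes no uniformity and no diversity, so your route would need hypotheses the statement does not grant, or a diagonal construction you have not supplied.

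The paper avoids central sequences entirely and instead verifies the definition $M\cong M\otimes R$ directly by an absorption trick: take $R=\vN(S_\infty)$, so $\vN(S_\infty)\otimes\vN(\Thomp_d(G_*))\cong\vN(S_\infty\times\Thomp_d(G_*))$; extend the pure cloning system on $(G_n)$ to one on $(S_\infty\times G_n)$ with the cloning maps acting as the identity on the $S_\infty$ factor, which identifies $S_\infty\times\Thomp_d(G_*)$ with $\Thomp_d(S_\infty\times G_*)$; write this as $(\varinjlim (S_\infty\times G_T))\rtimes F_d$ using purity; absorb $\vN(S_\infty)$ into each $\vN(G_T)$ using the hypothesis that the $\vN(G_n)$ are McDuff; and reverse the identifications to land back at $\vN(\Thomp_d(G_*))$. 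The point is that the McDuffness of the $G_n$ is used only ``locally,'' inside each $G_T$, and the tensor factor is threaded through the cloning system so that it is invisible to the $F_d$-action --- which is exactly the interaction your approach cannot control. If you want to salvage a central-sequence proof, you would need to diagonalize over trees and establish some form of Lemma~\ref{lem:uniform_lemma} without uniformity, which does not appear possible under the lemma's hypotheses alone.
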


\begin{proof}
We will show that $\vN(S_\infty)\otimes\vN(\Thomp_d(G_*)) \cong \vN(\Thomp_d(G_*))$. First note that $\vN(S_\infty)\otimes \vN(\Thomp_d(G_*)) \cong \vN(S_\infty\times\Thomp_d(G_*))$. Now extend the pure $d$-ary cloning system on $(G_n)_{n\in\N}$ to a pure $d$-ary cloning system on $(S_\infty\times G_n)_{n\in\N}$ by having all the $d$-ary cloning maps act as the identity on the $S_\infty$ factor. Then $(\sigma,[T_-,g,T_+])\mapsto [T_-,(\sigma,g),T_+]$ yields a well defined isomorphism $S_\infty\times \Thomp_d(G_*) \to \Thomp_d(S_\infty\times G_*)$. Next observe that $\Thomp_d(S_\infty\times G_*) = \Thkern_d(S_\infty\times G_*)\rtimes F_d$, and $\Thkern_d(S_\infty\times G_*)$ is the direct union of the groups $S_\infty\times G_T$, i.e., the direct limit $\varinjlim (S_\infty\times G_T)$. We conclude that $\vN(S_\infty)\otimes\vN(\Thomp_d(G_*)) \cong (\varinjlim \vN(S_\infty\times G_T))\rtimes F_d$. We are assuming the $\vN(G_n)$ are McDuff, so this is isomorphic to $(\varinjlim \vN(G_T)) \rtimes F_d$, and reversing the above procedure we see that this is isomorphic to $\vN(\Thomp_d(G_*))$ as desired.
\end{proof}

As we have seen, $\Thomp_d(G_*)$ can be ICC without the $G_n$ being ICC, much less the $\vN(G_n)$ being McDuff, so it is of interest to tell when $\vN(\Thomp_d(G_*))$ can be a McDuff factor without relying on the $\vN(G_n)$ being McDuff, or even being type $\II_1$. This next definition will lead to a nice sufficient condition in Theorem~\ref{thrm:thomp_mcduff}.

\begin{definition}[Uniform]\label{def:uniform}
Call a $d$-ary cloning system \emph{uniform} if for all $1\le k\le n$ and all $\ell,\ell'$ satisfying $k\le \ell\le\ell'\le k+d-1$ we have $\clone_k^n \circ \clone_\ell^{n+d-1} = \clone_k^n \circ \clone_{\ell'}^{n+d-1}$.
\end{definition}

The idea behind uniformity is that, intuitively, if we clone an element and then clone a part of it that was involved in the first cloning, it doesn't matter which part of it we use. For example in the standard $2$-ary cloning system on the braid group, if we clone the $3$rd strand to create two parallel strands, and then follow that up by cloning one of the new strands, either the $3$rd or $4$th, it doesn't matter which one we clone. Either way we will end up effectively having turned the original $3$rd strand into three new parallel strands.

Before stating the next lemma, which to some extent is the point of uniformity, let us introduce some terminology. Note that the vertices of a given $d$-ary tree can be naturally labeled by finite words in the alphabet $\{0,\dots,d-1\}$. Given a finite word $v$ in this alphabet, say two $d$-ary trees $T$ and $U$ \emph{agree away from $v$} if there exists a $d$-ary tree with a leaf labeled by $v$ such that each of $T$ and $U$ can be obtained from this tree by adding a $d$-ary tree to this leaf.

\begin{lemma}\label{lem:uniform_lemma}
Let $((G_n)_{n\in\N},(\rho_n)_{n\in\N},(\clone_k^n)_{k\le n})$ be a uniform $d$-ary cloning system. Let $R_-$ be a $d$-ary tree with $n$ leaves, say with one leaf labeled by $v$. Let $R_+$ be another $d$-ary tree with $n$ leaves, one of which is also labeled by $v$. Let $T$ and $U$ be $d$-ary trees that agree away from $v$, with the same number of leaves. Then every element of the form $[R_-,g,R_+]$ commutes with $[T,1,U]$.
\end{lemma}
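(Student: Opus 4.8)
The plan is to unwind the definitions so that both $[R_-,g,R_+]$ and $[T,1,U]$ live over a common tree, at which point the group multiplication in $\Thomp_d(G_*)$ becomes transparent, and the commutation reduces to an identity among cloning maps governed precisely by the uniformity axiom. The key observation is that the element $[T,1,U]$ only ``shuffles'' the region below the vertex $v$ (since $T$ and $U$ agree away from $v$), while the interesting group element $g$ in $[R_-,g,R_+]$ is supported on a tree that also has a leaf at $v$. Because $T$ and $U$ agree away from $v$, after expanding everything to a common refinement, the portion of $g$ that gets cloned into the subtree below $v$ is insensitive to \emph{which} leaf inside that subtree we clone along---and that insensitivity is exactly what Definition~\ref{def:uniform} provides.

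\textbf{Step 1: Reduce to a common tree.} First I would expand $R_-$, $R_+$, $T$, and $U$ so that all four triples are represented over trees sharing enough structure to multiply directly. Concretely, since $T$ and $U$ agree away from $v$, write them as obtained from a single tree $S$ (having a leaf at $v$) by attaching a common $d$-ary subtree $X$ at the leaf $v$; so $T$ and $U$ differ only in the shape of $S$ away from $v$, while sharing $X$ below $v$. I would then expand $[R_-,g,R_+]$ so that its trees contain the relevant carets at $v$, using the identity $[R_-,g,R_+]=[R_-',(g)\clone_{\cdots}^{\cdots},R_+']$ from the expansion relation. The goal of this step is to arrive at representatives where $T$ (respectively $U$) matches the ``$+$'' tree of one factor and the ``$-$'' tree of the other, so that the product rule $[A,g,B][B,h,C]=[A,gh,C]$ applies without further expansion.

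\textbf{Step 2: Compute both products.} With aligned representatives, I would compute $[R_-,g,R_+]\,[T,1,U]$ and $[T,1,U]\,[R_-,g,R_+]$ using the multiplication rule. In each case the middle group element is obtained from $g$ by applying a sequence of cloning maps $\clone_k^n$ corresponding to the carets added below $v$ when refining to the common tree. Because the trivial element $1$ contributes nothing to the group element (recall $(1)\clone_k^n=1$), the two resulting middle entries will be $g$ pushed forward along two \emph{different} sequences of cloning maps into the subtree below $v$---differing only in which leaf within the $v$-subtree is used at each cloning step.

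\textbf{Step 3: Apply uniformity.} The final step is to show these two iterated cloning maps agree, which is where uniformity does the work. Any two cloning sequences that clone within the subtree below $v$ but target different leaves there are related by the uniformity relation $\clone_k^n\circ\clone_\ell^{n+d-1}=\clone_k^n\circ\clone_{\ell'}^{n+d-1}$ for $k\le\ell\le\ell'\le k+d-1$, possibly combined with the ``product of clonings'' axiom (C2) to commute clonings at disjoint locations. \textbf{The main obstacle} I anticipate is purely bookkeeping: carefully tracking the leaf indices as carets are added, so that the two cloning sequences are seen to differ only by swaps of the form controlled by uniformity (and by legitimate (C2) rearrangements at separated leaves), rather than by genuinely distinct operations. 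Once the indices are matched up, uniformity forces the two middle group elements to coincide, so the two products are equal as triples, and hence $[R_-,g,R_+]$ commutes with $[T,1,U]$.
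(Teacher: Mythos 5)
Your Step 3 correctly isolates uniformity as the engine of the proof, and your opening intuition (that $[T,1,U]$ only shuffles the region below $v$) is the right picture. However, the route you propose --- expanding both elements to aligned representatives and computing the two products $[R_-,g,R_+][T,1,U]$ and $[T,1,U][R_-,g,R_+]$ directly --- hides a genuine difficulty inside the ``bookkeeping'' you defer, and it is exactly the difficulty the paper's proof is structured to avoid. When a triple $(R_-,g,R_+)$ is expanded by adding a caret to the $k$th leaf of $R_+$, the corresponding caret is added to the $\rho_n(g)(k)$th leaf of $R_-$. So to make $T$ (resp.\ $U$) into the $+$ tree (resp.\ $-$ tree) of an expansion of $(R_-,g,R_+)$ you must track how the representation maps $\rho_n$ permute the positions of the carets being added --- including the carets added \emph{away} from $v$ needed to match the common part of $T$ and $U$ --- and your proposal never mentions the $\rho_n$ at all. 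Consequently the Step 2 claim that the two middle entries are ``$g$ pushed forward along two different sequences of cloning maps into the subtree below $v$, differing only in which leaf within the $v$-subtree is used'' is not justified as stated: the two alignments also involve clonings away from $v$ landing at $\rho$-twisted positions on the two trees, the resulting products a priori have different tree pairs, and comparing them requires more than uniformity plus (C2).

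The paper sidesteps all of this with one decomposition your plan is missing: $[R_-,g,R_+]=[R_-,1,R_+][R_+,g,R_+]$. The first factor lies in the canonical copy of $F_d$ and is supported off the interval corresponding to $v$ (it sends the leaf $v$ of $R_+$ to the leaf $v$ of $R_-$, i.e.\ fixes that interval pointwise), while $[T,1,U]$ is supported on that interval, so these commute by the standard disjoint-support fact in $F_d$ --- no uniformity needed. For the second factor $[R,g,R]$ with $R=R_+$, one expands so that $R$, $T$, $U$ agree away from $v$; then $T$ and $U$ are expansions of $R$ obtained by adding carets only below $v$, so writing $[R,g,R]=[T,g',T]=[U,g'',U]$, both $g'$ and $g''$ are obtained from $g$ by first applying $\clone_k^n$ at the leaf $v$ and then further clonings inside the cloned block, whence $g'=g''$ by uniformity; the commutation is then the one-line computation $[T,1,U][R,g,R][U,1,T]=[T,1,U][U,g',U][U,1,T]=[T,g',T]=[R,g,R]$. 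I would recommend adopting this decomposition: without it you would have to carry out the $\rho$-twisted alignment explicitly, which your sketch does not do.
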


\begin{proof}
Since $[R_-,g,R_+]=[R_-,1,R_+][R_+,g,R_+]$, it suffices to first prove the result when $g=1$, and then separately prove the result when $R_-=R_+$. If $g=1$ then this result is a standard fact about $F_d$. Now assume $R_-=R_+$, call it $R$. Up to expanding $R$, we can assume without loss of generality that $R$, $T$, and $U$ all agree away from $v$. Now note that $T$ and $U$ are each expansions of $R$. Let $g'$ and $g''$ be such that $[R,g,R]=[T,g',T]=[U,g'',U]$. If $v$ is the $k$th leaf of $R$, then each of $g'$ and $g''$ are obtained from $g$ by first applying $\clone_k^n$ and then applying additional cloning maps. By uniformity then, $g'=g''$. Now
\[
[T,1,U][R,g,R][U,1,T]=[T,1,U][U,g',U][U,1,T]=[T,g',T]=[R,g,R] \text{,}
\]
i.e., $[R,g,R]$ commutes with $[T,1,U]$.
\end{proof}

Now that we have pinned down the key condition of uniformity, we can begin proving that it leads to certain factors being McDuff. First let us recall two key tools from Jolissaint's proof in \cite{jolissaint98} that $\vN(F)$ is McDuff, which were also key tools in Picioroaga's proof in \cite{picioroaga06} that every $\vN(F_d)$ is McDuff.

\begin{cit}\cite[Proposition~2.4]{jolissaint98}\label{cit:mcduff_criterion}
Let $G$ be a countable ICC group and $H\le G$ an ICC subgroup. Suppose that for any finite subset $E\subseteq G$ the intersection of the centralizers in $H$ of the elements of $E$ is non-abelian. Then the pair $\vN(H)\subseteq \vN(G)$ has the relative McDuff property.
\end{cit}

\begin{cit}\cite[Proposition~2.6]{jolissaint98}\label{cit:mcduff_criterion2}
Let $N$ be a McDuff factor with separable predual (for example if $N$ is the group von Neumann algebra of a countable group). Let $G$ be a countable amenable group (for example $\Z$) and $\alpha\colon G\to \Aut(N)$ an action of $G$ on $N$. Suppose $\alpha$ is \emph{centrally free}, meaning for any $1\ne g\in G$ there exists a central sequence $(a_n)_{n\in\N}$ in $N$ such that $\lim\limits_{n\to\infty}||\alpha(g)(a_n)-a_n||_2\ne 0$. Then the pair $N\subseteq N \rtimes_\alpha G$ has the relative McDuff property.
\end{cit}

\begin{remark}
Note that in Citation~\ref{cit:mcduff_criterion}, if such an $H$ exists satisfying this condition, then $H=G$ will also satisfy this condition. Hence this also gives us a sufficient condition on an ICC group $G$ (with no reference to a subgroup $H$) to ensure $\vN(G)$ is McDuff, namely that any intersection of finitely many element centralizers in $G$ is non-abelian. This is an interesting balancing act, since to be ICC the element centralizers must be ``small'' (infinite index) but this criterion requires them to also be ``big'' (have non-abelian intersections). In particular, note that Citation~\ref{cit:mcduff_criterion} cannot possibly apply if $G$ is finitely generated; in this case we can take $E$ to be a finite generating set, and then the condition would require the center of $G$ to be non-abelian, which is absurd. Thus Citation~\ref{cit:mcduff_criterion} is only potentially useful for non-finitely generated $G$.
\end{remark}

Considering $T$ and $V$ are not inner amenable, to get McDuff factors we should focus on ``$F$-like'' Thompson-like groups. In fact, the following generalization of pure $d$-ary cloning system will be enough of a restriction:

\begin{definition}[Slightly pure]\label{def:slightly_pure}
Call a $d$-ary cloning system on $(G_n)_{n\in\N}$ \emph{slightly pure} if for all $n\in\N$ and all $g\in G_n$ we have $\rho_n(g)(n)=n$ (here $\rho_n(g)\in S_n$, so this condition is saying that the permutation $\rho_n(g)$ of $\{1,\dots,n\}$ should fix $n$). Note that in a pure $d$-ary cloning system, $\rho_n(g)$ fixes every $i\in\{1,\dots,n\}$, so slightly pure is a generalization of pure.
\end{definition}

\begin{example}[The groups $\widehat{V}_d$]\label{ex:Vhat}
Let $\widehat{S}_n\le S_n$ be the subgroup of permutations fixing $n$, so $\widehat{S}_n\cong S_{n-1}$. The standard $d$-ary cloning system on $(S_n)_{n\in\N}$ restricts to a slightly pure $d$-ary cloning system on $(\widehat{S}_n)_{n\in\N}$, and the resulting Thompson-like group $\Thomp_d(\widehat{S}_*)$ is the subgroup of $V_d$ given by all $[T_-,\sigma,T_+]$ such that, intuitively, $\sigma$ does not permute the last leaf. Let us denote $\Thomp_d(\widehat{S}_*)$ by $\widehat{V}_d$; when $d=2$ this coincides with the group denoted by $\widehat{V}$ in \cite{brin07}, for reasons similar to the explanation given in \cite{brady08} in the braided case. For $d>2$ we believe the group $\widehat{V}_d$ has technically not appeared before in the literature, but it is an obvious generalization of $\widehat{V}=\widehat{V}_2$.
\end{example}

Since for an element $[T_-,\sigma,T_+]$ of $\widehat{V}_d$, the permutation $\sigma$ does not permute the last leaf, we get a well defined homomorphism
\[
\theta\colon \widehat{V}_d\to\Z
\]
given by sending $[T_-,\sigma,T_+]$ to $\delta_r(T_-)-\delta_r(T_+)$, where for a tree $T$ we write $\delta_r(T)$ for the distance from the root to the rightmost leaf.

For the rest of the section, we assume we have a fully compatible, slightly pure $d$-ary cloning system on a family of groups $(G_n)_{n\in\N}$. Now let us introduce a subgroup of $\Thomp_d(G_*)$ that will allow us to leverage Citations~\ref{cit:mcduff_criterion} and~\ref{cit:mcduff_criterion2}. Since our $d$-ary cloning system is fully compatible we have a map $\pi\colon \Thomp_d(G_*)\to V_d$ with kernel $\Thkern_d(G_*)$. Let $D_d\le \widehat{V}_d$ be the kernel of the map $\theta$ defined above, and let
\[
D_d(G_*)\defeq \pi^{-1}(D_d)\text{,}
\]
so $D_d(G_*)$ consists of all $[T_-,g,T_+]$ such that $\delta_r(T_-)=\delta_r(T_+)$. Note that $[F_d,F_d]\le \pi(D_d(G_*))\le D_d$.

\begin{lemma}\label{lem:D_ICC}
Let $((G_n)_{n\in\N},(\rho_n)_{n\in\N},(\clone_k^n)_{k\le n})$ be a fully compatible, slightly pure $d$-ary cloning system. Assume either that all the $G_n$ are ICC, or that the $d$-ary cloning system is diverse and uniform. Then the group $D_d(G_*)$ defined above is ICC.
\end{lemma}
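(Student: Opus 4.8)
The plan is to prove that $D_d(G_*)$ is ICC by applying the Pr\'eaux-style criterion of Lemma~\ref{lem:preaux_criterion}, using the short exact sequence obtained by restricting $\pi$. Since $D_d(G_*) = \pi^{-1}(D_d)$, restricting $\pi$ gives a short exact sequence
\[
1 \to \Thkern_d(G_*) \to D_d(G_*) \xrightarrow{\pi} \pi(D_d(G_*)) \to 1\text{,}
\]
where $[F_d,F_d] \le \pi(D_d(G_*)) \le D_d \le \widehat{V}_d$. The first thing I would check is that the quotient $\pi(D_d(G_*))$ is ICC. Since it contains the simple group $[F_d,F_d]$, I expect this to follow from (a mild adaptation of) Lemma~\ref{lem:W_ICC}: the argument there only used that the relevant subgroup of $V_d$ contains $[F_d,F_d]$ and acts by right-continuous bijections on $[0,1)$, both of which hold here. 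So the quotient being ICC should be essentially immediate.

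The substance of the proof is the second hypothesis of Lemma~\ref{lem:preaux_criterion}: I must show that every subgroup $N \le \Thkern_d(G_*)$ that is finite, or abelian of the form $\Z^n$ with finite-image conjugation action, and is normal in $D_d(G_*)$, is trivial. The cleanest route is to show the stronger statement that the only subgroup of $\Thkern_d(G_*)$ normal in $D_d(G_*)$ and finitely generated (equivalently, contained in some $K_T$) is trivial, mirroring the strategy of Lemma~\ref{lem:no_normals} and Theorem~\ref{thrm:diverse_ICC}. Here the argument splits according to the two cases in the hypothesis. In the diverse-and-uniform case, I would essentially rerun the proof of Lemma~\ref{lem:no_normals}: any finitely generated normal-in-$D_d(G_*)$ subgroup lies in some $K_T$, and the conjugation trick using elements $[T_k, T_\ell]$ (which all lie in $D_d(G_*)$ provided the carets are added so as to preserve $\delta_r$) forces the cloned element to lie in the intersection of all the cloning images, hence to be trivial by diversity. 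The point to verify carefully is that enough conjugating elements $[T_k,T_\ell]$ actually lie in $D_d(G_*)$, i.e.\ that one can realize the necessary permutations of carets while keeping $\delta_r(T_-) = \delta_r(T_+)$; slight purity and the ability to expand should give this freedom.

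In the all-$G_n$-ICC case, the strategy instead follows Theorem~\ref{thrm:inherit_ICC}: I would argue that for any finite index subgroup $H$ of $D_d(G_*)$, the intersection $H \cap G_T$ has finite index in the ICC group $G_T \cong G_{n(T)}$, so $Z(H) \cap G_T = \{1\}$, and then use that $\pi(H)$ has finite index in the ICC quotient to push any central element into $\Thkern_d(G_*)$, which is a directed union of subgroups of the $G_T$. The main obstacle I anticipate is bookkeeping around the $\theta$-constraint $\delta_r(T_-) = \delta_r(T_+)$: unlike in the full group $\Thomp_d(G_*)$, not every expansion or conjugation is available inside $D_d(G_*)$, so I must check at each step that the trees and conjugators I introduce respect this condition. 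Concretely, when I want to conjugate by a rearrangement of carets or pass to an expansion, I should add carets symmetrically (or choose the relevant leaves away from the rightmost branch, using slight purity to keep the last leaf fixed) so as not to change the difference of rightmost-leaf depths. Once this is handled, both cases feed into Lemma~\ref{lem:preaux_criterion} and conclude that $D_d(G_*)$ is ICC.
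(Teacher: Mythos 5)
Your overall architecture matches the paper's: in the all-$G_n$-ICC case you follow Theorem~\ref{thrm:inherit_ICC} (for a finite index subgroup $H$, get $Z(H)\cap G_T=\{1\}$ from ICC-ness of $G_T$, then push $Z(H)$ into $\Thkern_d(G_*)$ via the ICC image of $\pi$), and in the diverse case you reduce via Lemma~\ref{lem:preaux_criterion} to showing that no non-trivial finitely generated subgroup of $\Thkern_d(G_*)$ is normal in $D_d(G_*)$. You also correctly identify the one real difficulty: inside $D_d(G_*)$, not all of the conjugators $[T_\ell,1,T_k]$ from Lemma~\ref{lem:no_normals} are available. But your proposed fix --- that ``slight purity and the ability to expand should give this freedom'' --- does not work, and this is where the gap lies. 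If $T$ has $n$ leaves then $\delta_r(T_k)=\delta_r(T)$ for $k<n$ while $\delta_r(T_n)=\delta_r(T)+1$, so $[T_\ell,1,T_k]$ lies in $D_d(G_*)$ only when $k,\ell<n$ (or $k=\ell=n$); expanding does not relocate this obstruction, since every expansion again has a rightmost leaf with the same problem. Consequently the conjugation trick only shows that $(g)\clone_k^n$ lies in $\bigcap_{\ell=1}^{n-1}(G_n)\clone_\ell^n$, and diversity says nothing about this smaller intersection --- it only forces triviality of $\bigcap_{\ell=1}^{n}(G_n)\clone_\ell^n$.

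This is exactly why uniformity is in the hypotheses, and your sketch never actually uses it. The paper's resolution is: first expand $[T,g,T]$ at the rightmost leaf so that, without loss of generality, $g\in G_n$ lies in the image of the last cloning map $\clone_{n-(d-1)}^{n-(d-1)}$. Then the uniformity axiom, applied with $k=n-(d-1)$, $\ell=n-1$, $\ell'=n$, gives $(g)\clone_{n-1}^n=(g)\clone_n^n$, so the element $(g)\clone_{n-1}^n$ --- which the conjugation argument already places in the image of every $\clone_\ell^n$ with $\ell<n$ --- also lies in the image of $\clone_n^n$. Only now does diversity apply, yielding $(g)\clone_{n-1}^n=1$ and hence $g=1$ by injectivity. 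Without this step your argument does not close in the diverse case; with it, your outline becomes the paper's proof.
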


\begin{proof}
First assume all the $G_n$ are ICC. The argument proceeds very similarly to the proof of Theorem~\ref{thrm:inherit_ICC}. As in that proof, it suffices to show that for any finite index subgroup $H$ of $D_d(G_*)$, every element of the center $Z(H)$ lies in some $G_T$. Under the map $\pi\colon D_d(G_*)\to D_d$, the image of $H$ contains $[F_d,F_d]$, and hence is ICC by Lemma~\ref{lem:W_ICC}. Thus every element of $Z(H)$ lies in $\Thkern_d(G_*)$, and hence in some $G_T$ as desired.

Now assume the $d$-ary cloning system is diverse and uniform (and the $G_n$ are not necessarily ICC). Since $\pi(D_d(G_*))$ contains $[F_d,F_d]$, it is ICC by Lemma~\ref{lem:W_ICC}. Hence by Lemma~\ref{lem:preaux_criterion}, it suffices to show that the only finitely generated subgroup of $\Thkern_d(G_*)$ that is normal in $D_d(G_*)$ is the trivial one. The argument is similar to the one in Lemma~\ref{lem:no_normals} showing that this holds for $\Thomp_d(G_*)$. As in that proof, we reduce to the situation where we have an element $[T,g,T]$, and every conjugate of $[T,g,T]$ in $D_d(G_*)$ (but now not necessarily in $\Thomp_d(G_*)$) lies in $K_T$. We want to show that $g=1$. First expand $[T,g,T]$ so that without loss of generality $g\in G_n$ lies in the image of $\clone_{n-(d-1)}^{n-(d-1)}$. Now the argument from the proof of Lemma~\ref{lem:no_normals} shows that for all $1\le k<n$ the element $(g)\clone_k^n$ of $G_{n+d-1}$ lies in the image of $\clone_\ell^n$ for all $1\le \ell<n$. Note that these bounds do not include $k=n$ or $\ell=n$, since we must work in $D_d(G_*)$. However, because $g$ lies in the image of $\clone_{n-(d-1)}^{n-(d-1)}$ and the $d$-ary cloning system is uniform, $(g)\clone_{n-1}^n=(g)\clone_n^n$, and so in fact this also lies in the image of $\clone_n^n$. Having shown that $(g)\clone_{n-1}^n$ lies in the image of $\clone_\ell^n$ for all $1\le \ell\le n$, we conclude that $(g)\clone_{n-1}^n=1$ since the $d$-ary cloning system is diverse, and so by injectivity $g=1$ as desired.
\end{proof}

Note that $\Thomp_d(G_*)\cong D_d(G_*)\rtimes \Z$, where the map $\Thomp_d(G_*)\to\Z$ is the composition of $\pi\colon \Thomp_d(G_*)\to \widehat{V}_d$ with $\theta\colon\widehat{V}_d\to \Z$. Thus we can view $\Thomp_d(G_*)$ as an internal semidirect product $\Thomp_d(G_*)= D_d(G_*)\rtimes \Z$ by choosing for the generator of $\Z$ any element of $\Thomp_d(G_*)$ with $\theta$ value $1$, for instance the standard generator $x_0$. The decomposition
\[
\Thomp_d(G_*)=D_d(G_*)\rtimes\langle x_0\rangle
\]
will be important in the coming proof.

\begin{theorem}\label{thrm:thomp_mcduff}
Let $((G_n)_{n\in\N},(\rho_n)_{n\in\N},(\clone_k^n)_{k\le n})$ be a fully compatible, slightly pure, uniform $d$-ary cloning system. Assume either that all the $G_n$ are ICC, or that the $d$-ary cloning system is diverse (so in either case $\Thomp_d(G_*)$ is ICC). Then the pair $\vN(D_d(G_*))\subseteq \vN(\Thomp_d(G_*))$ has the relative McDuff property, so in particular $\vN(\Thomp_d(G_*))$ is a McDuff factor and $\Thomp_d(G_*)$ is inner amenable.
\end{theorem}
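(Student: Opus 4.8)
The plan is to first establish that $\vN(D_d(G_*))$ is a McDuff factor via the centralizer criterion of Citation~\ref{cit:mcduff_criterion}, and then to promote this to the \emph{relative} McDuff property for the pair $\vN(D_d(G_*))\subseteq\vN(\Thomp_d(G_*))$ by realizing the larger algebra as a crossed product $\vN(D_d(G_*))\rtimes_\alpha\Z$ with $\alpha$ centrally free, so that Citation~\ref{cit:mcduff_criterion2} applies. Throughout, $D_d(G_*)$ is ICC by Lemma~\ref{lem:D_ICC}, so $\vN(D_d(G_*))$ is a type $\II_1$ factor with separable predual (as $D_d(G_*)$ is countable).

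For the first step I would apply Citation~\ref{cit:mcduff_criterion} with $G=H=D_d(G_*)$, so fix a finite subset $E\subseteq D_d(G_*)$ and produce a non-abelian subgroup centralizing every element of $E$. The key geometric point is that every element of $D_d(G_*)$ has trivial $V_d$-image on a neighborhood of the rightmost end: slight pureness forces $\pi(e)\in\ker(\theta)\subseteq\widehat{V}_d$, so the rightmost leaf is fixed and $\delta_r$ is preserved, whence $\pi(e)$ acts as the identity on the rightmost depth-$j$ region (the clopen set under the leaf $(d-1)^j$) once $j$ is at least the rightmost depth of a representative of $\pi(e)$. Choosing $j$ large enough to work for all of the finitely many $e\in E$ at once, each $e$ then admits a representative $[R_-,g,R_+]$ in which the vertex $v\defeq (d-1)^j 0$ (go right $j$ times, then once to the left, so $v$ lies off the rightmost path) is a leaf of both $R_-$ and $R_+$. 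By Lemma~\ref{lem:uniform_lemma}, every element of the form $[T,1,U]$ with $T,U$ agreeing away from $v$ commutes with each such $[R_-,g,R_+]$; here uniformity is exactly what licenses the commutation despite the nontrivial group decoration $g$. These $[T,1,U]$ form a copy of $F_d$ supported in the $v$-subtree, and since $v$ lies off the rightmost path they preserve $\delta_r$ and hence lie in $D_d(G_*)$. As $F_d$ is non-abelian, the intersection of the centralizers in $D_d(G_*)$ of the elements of $E$ is non-abelian, and Citation~\ref{cit:mcduff_criterion} yields that $\vN(D_d(G_*))$ is McDuff.

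For the second step I would use the decomposition $\Thomp_d(G_*)=D_d(G_*)\rtimes\langle x_0\rangle$, which identifies $\vN(\Thomp_d(G_*))$ with the crossed product $\vN(D_d(G_*))\rtimes_\alpha\Z$, where $\alpha$ is conjugation by $x_0$. It remains to check that $\alpha$ is centrally free, i.e.\ that for each $k\ne 0$ there is a central sequence not fixed by $\alpha^k$. For this I would take the subgroups above at increasing depth: letting $u_n\in D_d(G_*)$ be a fixed nontrivial $F_d$-element supported in the region at $v_n=(d-1)^n 0$, the sequence $(u_n)_{n\in\N}$ is central in $\vN(D_d(G_*))$ because any fixed element of $D_d(G_*)$ commutes with $u_n$ once $n$ exceeds its rightmost depth (again by Lemma~\ref{lem:uniform_lemma}), and it is non-trivial since $||u_n-\lambda||_2\ge 1$ for every scalar $\lambda$. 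Because $\theta(x_0)=1$, conjugation by $x_0^k$ shifts the rightmost spine by $k$ levels and therefore moves the support region of $u_n$ off of itself, so $\alpha^k(u_n)$ and $u_n$ are distinct group elements and $||\alpha^k(u_n)-u_n||_2=\sqrt{2}\ne 0$. Thus $\alpha$ is centrally free, Citation~\ref{cit:mcduff_criterion2} gives the relative McDuff property for $\vN(D_d(G_*))\subseteq\vN(\Thomp_d(G_*))$, and in particular $\vN(\Thomp_d(G_*))$ is McDuff, hence has property $\Gamma$; since $\Thomp_d(G_*)$ is ICC it is then inner amenable.

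I expect the main obstacle to be the geometric bookkeeping in the first step: verifying precisely that a finite subset of $D_d(G_*)$ can be simultaneously represented so that a single off-spine leaf $v$ deep in the rightmost region is common to every domain and range tree. This is exactly where slight pureness (pinning down a canonical, trivially-acted-upon right region for $\ker(\theta)$) and uniformity (Lemma~\ref{lem:uniform_lemma}, giving \emph{exact} commutation in the presence of group-element decorations) are both indispensable; everything else, including the central-freeness computation, is comparatively routine once this region is located.
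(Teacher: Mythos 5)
Your proposal is correct and follows essentially the same route as the paper: Citation~\ref{cit:mcduff_criterion} applied to $D_d(G_*)$ using Lemma~\ref{lem:uniform_lemma} to produce a non-abelian centralizing copy of $F_d$ deep along the rightmost spine, followed by Citation~\ref{cit:mcduff_criterion2} applied to $\Thomp_d(G_*)=D_d(G_*)\rtimes\langle x_0\rangle$ with central freeness witnessed by a sequence of elements supported ever deeper in the rightmost region. The only (harmless, arguably slightly cleaner) deviation is that you anchor the agreement vertex at the off-spine leaf $(d-1)^j0$ rather than at the rightmost leaf $(d-1)^m$ as the paper does, which makes it immediate that the centralizing copy of $F_d$ preserves $\delta_r$ and hence lies in $D_d(G_*)$.
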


\begin{proof}
First we explain why $\vN(D_d(G_*))$ is McDuff. We will use Citation~\ref{cit:mcduff_criterion}, which applies since $D_d(G_*)$ is ICC by Lemma~\ref{lem:D_ICC}. Let $E$ be a finite subset of $D_d(G_*)$. Let $m\in\N$ be such that every element of $E$ can be represented by some $[T_-,g,T_+]$ with $\delta_r(T_-)=\delta_r(T_+)=m$. Note that the rightmost leaves of any such $T_-$ and $T_+$ are each labeled by $(d-1)^m$. Let $T$ and $U$ be any trees that agree away from $(d-1)^m$, with the same number of leaves. By Lemma~\ref{lem:uniform_lemma}, $[T,1,U]$ commutes with every element of $E$. Since the collection of all such $[T,1,U]$ certainly includes non-commuting elements, this shows that the intersection of the centralizers of elements of $E$ is non-abelian, so Citation~\ref{cit:mcduff_criterion} says $\vN(D_d(G_*))$ is McDuff.

Now we will apply Citation~\ref{cit:mcduff_criterion2} to the decomposition $\Thomp_d(G_*)=D_d(G_*)\rtimes\langle x_0\rangle$, to prove that the pair $\vN(D_d(G_*))\subseteq \vN(\Thomp_d(G_*))$ has the relative McDuff property. The conjugation action of $\langle x_0\rangle$ on $D_d(G_*)$ induces an action $\alpha$ of $\langle x_0\rangle$ on the McDuff factor $\vN(D_d(G_*))$. Let $x_0^k$ for $k\ne 0$ be an arbitrary non-trivial element of $\langle x_0\rangle$. For each $n\in\N$ let $a_n$ be a non-trivial element of $[F_d,F_d]\le D_d$ whose support lies in $(1-\frac{1}{n},1)$ (here the \emph{support} of a self-homeomorphism of $[0,1]$ is the set of points it does not fix). Then for all $n\in\N$ we have $x_0^k a_n x_0^{-k} \ne a_n$, so $\lim\limits_{n\to\infty}||\alpha(x_0^k)(a_n)-a_n||_2=\sqrt{2}\ne 0$. We claim that $(a_n)_{n\in\N}$ is a (non-trivial) central sequence in $\vN(D_d(G_*))$, which will imply that $\alpha$ is centrally free. As in the proof of \cite[Corollary~2.7]{jolissaint98}, it suffices to show that every finite subset $E$ of $D_d(G_*)$ commutes with $a_n$ for $n$ large enough, and we can use a similar argument as in the previous paragraph. Let $m\in\N$ be such that every element of $E$ can be represented by some $[T_-,g,T_+]$ with $\delta_r(T_-)=\delta_r(T_+)=m$. Note that the rightmost leaves of any such $T_-$ and $T_+$ are each labeled by $(d-1)^m$. Let $T$ and $U$ be any trees that agree away from $(d-1)^m$, with the same number of leaves. By Lemma~\ref{lem:uniform_lemma}, $[T,1,U]$ commutes with every element of $E$. For any sufficiently large $n$, $a_n$ is of this form, so $\alpha$ is centrally free and we are done.
\end{proof}

\subsection{Examples}\label{sec:mcduff_examples}

Let us point out which examples from Section~\ref{sec:examples} satisfy the conditions in Theorem~\ref{thrm:thomp_mcduff}, and hence yield McDuff factors and inner amenable groups.

\begin{example}[$\widehat{V}_d$]\label{ex:Vhat_mcduff}
As a first, somewhat surprising example, consider the $d$-ary cloning system on $(\widehat{S}_n)_{n\in\N}$ from Example~\ref{ex:Vhat}. This is fully compatible and slightly pure, and is easily seen to be diverse and uniform. Hence by Theorem~\ref{thrm:thomp_mcduff}, $\vN(\widehat{V}_d)$ is a McDuff factor, and $\widehat{V}_d$ is inner amenable. The reason we find this somewhat surprising is that $\widehat{V}_d$ is quite similar to $V_d$, for example $\widehat{V}_d$ and $V_d$ embed into each other in natural ways (the embedding $\widehat{V}_d\to V_d$ is obvious, and an embedding $V_d\to \widehat{V}_d$ can be given by, roughly, sticking each $d$-ary tree on the first leaf of a $d$-caret, and embedding $S_n$ into $\widehat{S}_{n+d-1}$ in the obvious way). But $V=V_2$ is not inner amenable, and it is reasonable to expect none of the $V_d$ are.
\end{example}

\begin{example}[Direct products with injective endomorphisms]\label{ex:twisted_products_mcduff}
Next consider $\Pi^n(G)$, with injective endomorphisms $\phi_1,\dots,\phi_d\colon G\to G$, as in Subsection~\ref{sec:twisted_direct_products}. One can check that, in order for this $d$-ary cloning system to be uniform, we would need all the $\phi_i$ to be the identity. In this case the $d$-ary cloning system is certainly not diverse, so Theorem~\ref{thrm:thomp_mcduff} will only apply directly if $G$ itself is ICC. In the case when $G$ is ICC and the $\phi_i$ are all the identity everything works: the $d$-ary cloning system is pure and uniform, and the $\Pi^n(G)$ are ICC, so Theorem~\ref{thrm:thomp_mcduff} says $\vN(\Thomp_d(\Pi^*(G)))$ is a McDuff factor, and $\Thomp_d(\Pi^*(G))$ is inner amenable. Another related example, which does not require $G$ to be ICC, is as follows. Let $\Psi^n(G)\defeq \{1\}\times\Pi^{n-1}(G) \le \Pi^n(G)$ (still in the situation where all the $\phi_i$ are the identity). The $d$-ary cloning system on $\Pi^n(G)$ restricts to one on $\Psi^n(G)$, but restricted to $\Psi^n(G)$ now it is diverse. It is also (still) pure and uniform, so Theorem~\ref{thrm:thomp_mcduff} says $\vN(\Thomp_d(\Psi^*(G)))$ is a McDuff factor, and $\Thomp_d(\Psi^*(G))$ is inner amenable. We emphasize that here $G$ can be any (countable) group. These $d$-ary cloning systems on the $\Psi^n(G)$ are new, and seem especially intriguing.
\end{example}

\begin{example}[Braided Thompson groups]
Now consider the standard $d$-ary cloning system on the pure braid groups $PB_n$ from Subsection~\ref{sec:braid}. This is easily seen to be pure, diverse, and uniform, so Theorem~\ref{thrm:thomp_mcduff} says $\vN(\Thomp_d(PB_*))$ is a McDuff factor, and $bF_d=\Thomp_d(PB_*)$ is inner amenable. For example the standard ``braided $F$'' group $bF$ is inner amenable. We consider this to be surprising, since $bF_d$ contains so many non-abelian free subgroups. Indeed, any pair of non-commuting elements of $\Thkern_d(PB_*)\le bF_d$ lie in some $PB_n$ and hence generate a copy of $F_2$ \cite{leininger10}. (As for $bV_d=\Thomp_d(B_*)$, since this is ``$V$-like'' we expect it is not inner amenable.)
\end{example}

\begin{example}[Upper triangular matrix groups]
Finally, consider the $d$-ary cloning systems on families of upper triangular matrix groups from Subsection~\ref{sec:mtx}. These are pure, and easily seen to be uniform. As we saw, for the families $(\overline{B}_n(R))_{n\in\N}$ and $(\Ab_n)_{n\in\N}$ the $d$-ary cloning system is diverse. Hence Theorem~\ref{thrm:thomp_mcduff} says $\vN(\Thomp_d(\overline{B}_*(R)))$ and $\vN(\Thomp_d(\Ab_*))$ are McDuff factors, and $\Thomp_d(\overline{B}_*(R))$ and $\Thomp_d(\Ab_*)$ are inner amenable.
\end{example}

The last example from Section~\ref{sec:examples} was the R\"over--Nekrashevych groups, which are ICC, but these $d$-ary cloning systems are not fully compatible, and tend to not be uniform either, and to begin with should be considered ``$V$-like'', so all in all we would not expect R\"over--Nekrashevych groups to be inner amenable. We expect that the proof of non-inner amenability of $V$ from \cite{haagerup17} should apply to R\"over--Nekrashevych groups when $d=2$, and in general we expect the result to be true for all $d$.

\section{Questions}\label{sec:questions}

Let us conclude with some questions that naturally arise.

\begin{question}[Non-fully compatible?]
Can we drop the assumption that the $d$-ary cloning system is fully compatible in Theorems~\ref{thrm:inherit_ICC}, \ref{thrm:diverse_ICC}, and~\ref{thrm:thomp_mcduff}? Full compatibility is crucial for our proofs, since it is required for the existence of the map $\pi\colon \Thomp_d(G_*) \to V_d$, but it would be interesting to find proofs that do not rely on this map. We suspect that full compatibility is just a useful tool, and should not be necessary for the results to still hold.
\end{question}

\begin{question}[$V$-like implies non-inner amenable?]\label{quest:non_inner_amen}
In \cite{haagerup17}, Haagerup and Olesen proved that $T$ and $V$ are not inner amenable, and so $\vN(T)$ and $\vN(V)$ are not McDuff. It is natural to ask whether $\Thomp_d(G_*)$ similarly fails to be inner amenable in certain non-pure cases, e.g., when the image of $\rho_n$ in $S_n$ does not fix any element of $\{1,\dots,n\}$. The difficulty is that Haagerup--Olesen's proof relied on a certain subgroup $\Lambda$ of $T$, which does not naturally pull back to $\Thomp_d(G_*)$ if the $\rho_n\colon G_n\to S_n$ do not split, so it is hard to tell what to expect. In the cases where the maps $\rho_n\colon G_n\to S_n$ do split, e.g., for the $d$-ary cloning systems yielding R\"over--Nekrashevych groups $\Thomp_d(S_*\wr G)$, we expect the proof of non-inner amenability should work the same way.
\end{question}

\begin{question}[Finitely generated Vaes group?]
Can $d$-ary cloning systems be used to construct an example of a finitely generated (or finitely presented, or type $\F_\infty$) ICC group that is inner amenable but whose group von Neumann algebra does not have property $\Gamma$ (and so is not McDuff)? In \cite{vaes12} Vaes constructed a non-finitely generated example, and conjectured that a finitely generated example should exist. One of the key features of $d$-ary cloning systems is their ability to produce groups with nice finiteness properties, so this seems like a promising direction. On the other hand, all our examples presented here either yield McDuff factors or seem likely to not be inner amenable (see also Question~\ref{quest:non_inner_amen}), so it is not entirely clear what to expect.
\end{question}

\begin{question}[Is $F$ a McDuff group?]
In addition to asking whether the group von Neumann algebra of a group $G$ is a McDuff factor, one can also ask whether $G$ is a \emph{McDuff group}, in the sense of Deprez--Vaes \cite{deprez18}. A group is McDuff if it admits a free ergodic probability measure preserving action on some measure space $(X,\mu)$ such that the crossed product $L^\infty(X)\rtimes G$ is a McDuff factor. It turns out that there exist McDuff groups $G$ such that $\vN(G)$ is not a McDuff factor, and even such that $\vN(G)$ does not have property $\Gamma$ \cite{kida15}. As for the converse, to the best of our knowledge it is open whether $\vN(G)$ being McDuff implies that $G$ is a McDuff group. Thus, we pose the question, are all the groups in this paper whose group von Neumann algebras are McDuff factors themselves McDuff groups? For example, is $F$ a McDuff group?
\end{question}

\begin{question}[Leverage ``non-amenable but inner amenable''?]
We now have many new examples of non-amenable groups that are inner amenable, for instance $bF$. In \cite{tucker-drob20}, Tucker-Drob provides a wealth of results about non-amenable, inner amenable groups. Can any of these results give us insight into $bF$, or into $F$? Just to single out one example of a potentially interesting result, since $bF$ is torsion-free, \cite[Theorem~9]{tucker-drob20} implies that given any finite collection of non-amenable subgroups $H_1,\dots,H_k$ of $bF$, there exists an element $g\in bF$ such that for all $1\le i\le k$ the intersection of $H_i$ with the centralizer of $g$ is non-amenable. Does this provide any interesting restrictions on the subgroup structure of $bF$?
\end{question}

\bibliographystyle{alpha}
\newcommand{\etalchar}[1]{$^{#1}$}

\end{document}